\newcounter{contador}
\newcounter{teoA}
\newtheorem{teoa}[teoA]{Theorem}
\newtheorem{propo}[contador]{Proposition}
\newtheorem{teo}[contador]{Theorem}
\newtheorem{lem}[contador]{Lemma}
\newtheorem{defi}[contador]{Definition}
\theoremstyle{remark}
\newtheorem{nota}[contador]{Remark}
\newcounter{ex}
\newenvironment{example}[1][]{\refstepcounter{ex}\par\medskip
   \noindent \textbf{Example~\theex.} \rmfamily}{\medskip}
\newcommand{\R}{{\mathbb R}}
\newcommand{\C}{{\mathbb C}}
\newcommand{\N}{{\mathbb N}}
\newcommand{\U}{{\cal{U}}}
\title{Parrondo's dynamic paradox for the stability\\ of non-hyperbolic fixed
points}
\author{Anna Cima$^{(1)}$, Armengol Gasull$^{(1)}$ and V\'{\i}ctor Ma\~{n}osa $^{(2)}$
  \\*[.1truecm]
{\small \textsl{$^{(1)}$ Departament de Matem\`{a}tiques, Facultat
de Ci\`{e}ncies,}}
\\*[-.25truecm] {\small \textsl{Universitat Aut\`{o}noma de Barcelona,}}
\\*[-.25truecm] {\small \textsl{08193 Bellaterra, Barcelona, Spain}}
\\*[-.25truecm] {\small \textsl{cima@mat.uab.cat,
gasull@mat.uab.cat}}\\
\\*[-.25truecm] {\small \textsl{$^{(2)}$ Departament de Matem\`{a}tiques,}}
\\*[-.25truecm] {\small \textsl{Universitat Polit\`{e}cnica de Catalunya}}
\\*[-.25truecm] {\small \textsl{Colom 1, 08222 Terrassa, Spain}}
\\*[-.25truecm] {\small \textsl{victor.manosa@upc.edu}}}
\date{}
\begin{document}

\maketitle
\begin{abstract} We show that for  periodic non-autonomous discrete
dynamical systems,  even when  a common  fixed point for each of the
autonomous associated dynamical systems is repeller, this fixed
point can became a local attractor for the whole system, giving rise
to a Parrondo's dynamic type paradox.
\end{abstract}

\noindent {\sl  Mathematics Subject Classification 2010:}
37C05, 37C25, 37C75, 39A30.

\noindent {\sl Keywords:} Periodic discrete dynamical systems,
non-hyperbolic points, local and global asymptotic stability,
Parrondo's dynamic paradox.


\section{Introduction and main results}

The study of periodic  discrete dynamical systems is a classical
topic that has attracted the researcher's interest in the last
years, among other reasons, because they are good models for
describing the dynamics of biological systems under periodic
fluctuations whether due to external disturbances or effects of
seasonality, see \cite{BHS,ES1,ES2,FS,Sv,SR1,SR2} and the references
therein.

These $k$-periodic systems can be written as
\begin{equation}\label{E-periodic-system}
x_{n+1}=f_{n+1}(x_n),
\end{equation}
with initial condition $x_0,$ and a set of maps $\{f_m\}_{m\in\N}$
such that $f_m=f_\ell$ if $m\equiv \ell\, (\!\!\!\mod k)$. For
short, the set $\{f_1,\ldots,f_k\}$  will be called \emph{periodic
set.} We also will assume that all $f_m:\mathcal{U}\subset
\R^n\rightarrow \mathcal{U}$ being $\mathcal{U}$ an open set of
$\R^n.$

It is well-known that given a periodic discrete dynamical  system
(\ref{E-periodic-system}), it can be studied via the
\emph{composition map} $f_{k,k-1,\ldots,1}=f_{k}\circ f_{k-1}\circ
\cdots\circ f_1$. For instance, if all maps
$f_m\in\{f_1,f_2,\ldots,f_k\}$ share a common fixed point $p$, the
nature of the steady state $x=p$  can be studied through the nature
of the fixed point $p$ of $f_{k,k-1,\ldots,1}$.  In the same way,
the  attractor of a periodic discrete dynamical system
(\ref{E-periodic-system}) is the union of attractors of some
composition maps, see \cite[Thms. 3 and 6]{FS}.

A specially  interesting case occurs when all the maps in the
periodic set have a  fixed point which is a \emph{global
asymptotically stable} (GAS) and the periodic system has a global
asymptotically stable periodic orbit \cite{A,BHS,CL,ES1}. In this
setting, the simplest situation corresponds to the case when all the
autonomous maps share the same fixed point which is GAS for all of
them and it is also a GAS fixed point for composition map, see
\cite{W}. It is known this is not a general phenomenon, see for
instance Examples \ref{e:lin1} or \ref{e:lin2} of next section.

We will focus on studying the stability of fixed points of
$k$-periodic systems which are common fixed points of all the maps
in the periodic set. We restrict ourselves to this setting because
it is the simplest type of ``periodic orbit'' that a periodic
dynamical system can have.

Notice that given two stable $2\times2$ matrices\footnote{All their
eigenvalues have modulus smaller than 1.}, $A_1$ and $A_2$, it holds
that $|\det(A_i)|<1$ and hence $|\det(A_2
A_1)|=|\det(A_2)\det(A_1)|<1$. As a consequence, the fixed point of
any composition map $f_{k,\ldots,1}$ in $\R^n$ (linear or
non-linear) resulting of the composition of $k$ maps $f_{j}$ with a
common  hyperbolic fixed point, which is asymptotically stable for
all them, must be generically either asymptotically stable or a
saddle, but it can never be repeller. A similar result happens with
maps with a common hyperbolic repeller: generically this point is
either repeller or a saddle for the composition map, but never a
\emph{local asymptotically stable} (LAS) fixed point. Hence, in this
paper, to show that this third possibility may happen in both
situations, we will need to deal with non-hyperbolic fixed points.
We recall the definitions of LAS, GAS, repeller and semi-AS fixed
point in Section~\ref{S-example}.

The so called \emph{Parrondo's paradox} is a paradox in game theory,
that essentially says that {\it a combination of losing strategies
becomes a winning strategy,}  see~\cite{HA,Par}. We will prove that
in the non-hyperbolic case \emph{the periodicity can destroy the
repeller character of the common fixed points, giving rise to
attracting points for the complete non-autonomous system}, showing,
in consequence, the existence of a kind of \emph{ Parrondo's dynamic
type paradox} for periodic discrete dynamical systems. The
phenomenon that we will show  is in a simpler setting than the one
presented in \cite{CLP}, because there the authors combine
periodically one-dimensional maps $f_1$ and $f_2$ to  give rise to
chaos or order.

We start studying the one-dimensional case. The tools for
determining the stability of non-hyperbolic fixed points for
one-dimensional analytical maps are well established, see Section
\ref{ss:estabilitat}.  As we will see, one  the key points is the
computation of the so called {\it stability constants} for studying
the stability of non-hyperbolic fixed points of one-dimensional
non-orientable analytic maps.

Using these tools, in the periodic case we prove the following
result which implies that, contrary to what we will show that
happens in even dimensions,  it is \emph{impossible} to find
\emph{two} one-dimensional maps sharing a fixed point which is
repeller, and such that the composition map has a LAS fixed point.
However, it is possible to find a LAS fixed point when three or more
maps sharing a repeller fixed point are composed (that is for
$k$-periodic systems with $k\geq 3$) giving rise to the  Parrondo's
dynamic paradox.

\begin{teoa}\label{T-laslasreprep} The following statements hold:
\begin{enumerate}
  \item[(a)] Consider two  analytic maps
    $f_{i}:\mathcal{U}\subseteq\R\to\U$, $i=1,2$ having a common fixed
    point $p\in\U$ which is  LAS (resp. repeller). Then, the point $p$ is either
    LAS (resp. repeller)
    or semi-AS for the composition map
    $f_{2,1}$ and both possibilities may happen.
  \item[(b)] There are $k\ge3$ polynomial maps
    $f_{i}:\mathcal{U}\subseteq\R\to\U$, $i=1,2,\ldots,k$ sharing a common fixed
    point $p\in\U$ which is  LAS (resp. repeller) for all them and
     such that  $p$ is a repeller (resp. a LAS) fixed point for the composition
map $f_{k,k-1,\ldots,1}$.
    \end{enumerate}
\end{teoa}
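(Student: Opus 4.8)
The plan is to use the theory of stability constants for one-dimensional analytic maps recalled in Section~\ref{ss:estabilitat}. Recall that if $g$ is analytic with fixed point $p$ and $g'(p)=1$, we write $g(x)=x+a_m(x-p)^m+\cdots$ with $a_m\neq 0$; if $m$ is odd, the stability is governed by the sign of $a_m$ (LAS if $a_m<0$, repeller if $a_m>0$), while if $m$ is even the point is semi-AS. The other non-hyperbolic case, $g'(p)=-1$, is handled through $g\circ g$, whose derivative at $p$ is $+1$: the first nonzero ``stability constant'' of $g\circ g$ (equivalently, the Schwarzian-type invariants of $g$) decides LAS versus repeller, and here the relevant order is automatically such that both LAS and repeller genuinely occur. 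Without loss of generality take $p=0$ throughout.

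\textbf{Part (a).} Suppose $f_1,f_2$ both have $0$ as a repeller fixed point (the LAS case is symmetric, replacing each $f_i$ by its inverse germ, or by running the same argument with reversed inequalities). Since $0$ is non-hyperbolic we have $f_i'(0)=\pm 1$. Consider the composition $h=f_2\circ f_1$, so $h'(0)=f_2'(0)f_1'(0)=\pm1$. I would split into cases according to the signs $\epsilon_i:=f_i'(0)$. If $\epsilon_1\epsilon_2=-1$, then $h'(0)=-1$; one passes to $h\circ h = f_2f_1f_2f_1$ and shows its first stability constant cannot have the sign forcing a repeller — more precisely, the key point is that $f_1\circ f_2$ and $f_2\circ f_1$ are conjugate (by $f_1$), so they have the same stability, and $h\circ h=(f_1\circ f_2)\circ(f_2\circ f_1)$ becomes a composition of a germ with its ``reverse'', whose leading correction term has a definite sign; this rules out the repeller. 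If $\epsilon_1\epsilon_2=1$, both $\epsilon_i$ equal $+1$ or both equal $-1$. When both are $+1$, write $f_i(x)=x+a_i x^{m_i}+\cdots$ with $a_i>0$ (repeller forces $m_i$ odd and $a_i>0$); then $h(x)=x+(\text{positive combination})x^{m}+\cdots$ where $m=\min(m_1,m_2)$, and if the two leading orders are equal the leading coefficient is $a_1+a_2>0$, so $h$ is a repeller, while a cancellation of orders cannot produce an even-order term with the wrong sign here — the only borderline outcome is semi-AS, which happens exactly when the combinatorics force an even leading order. When both $\epsilon_i=-1$, reduce to the previous analysis via $f_i\circ f_i$ and the conjugacy trick. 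Assembling the cases gives: $0$ is LAS, repeller, or semi-AS for $f_{2,1}$, and explicit quadratic/cubic examples (e.g. $f_1(x)=-x+x^2$-type germs) realize each surviving possibility.

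\textbf{Part (b): the Parrondo construction.} For $k\ge 3$ I would look for maps with $f_i'(0)=-1$, so that each autonomous map is non-hyperbolic and can be tuned to be a repeller, yet the composition has derivative $(-1)^k$ at $0$ and — crucially when $k$ is odd — derivative $-1$, whose higher-order stability is a genuinely new invariant not controlled by the individual repeller conditions. Concretely, take $k=3$ and seek polynomials $f_i(x)=-x+b_i x^2+c_i x^3+\cdots$ on a common interval $\U$; the condition that $0$ be a repeller for $f_i$ is a single inequality on $(b_i,c_i)$ coming from the stability constant of $f_i\circ f_i$ (schematically something like $c_i-b_i^2>0$, or its analogue). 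One then computes the stability constant of $f_{3,2,1}\circ f_{3,2,1}$, which is an explicit polynomial expression in the $b_i,c_i$, and exhibits a choice of coefficients satisfying all three repeller inequalities while making that constant negative, i.e. $0$ LAS for the composition. The reverse statement (LAS autonomous maps, repeller composition) is obtained by the symmetric choice of signs. I expect the main obstacle to be exactly this last step: controlling the sign of the composition's stability constant — a fairly involved polynomial in the parameters — simultaneously with the $k$ individual inequality constraints, and then exhibiting clean polynomial witnesses (one anticipates needing cubic maps on a carefully chosen $\U$, with the verification that each $f_i$ maps $\U$ into itself done by hand). The conjugacy identity $f_i\circ f_{i-1}\circ\cdots\sim f_{i-1}\circ\cdots\circ f_i$ should be used repeatedly to keep these computations symmetric and tractable.
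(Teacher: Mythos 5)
Your plan for part (b) breaks down at exactly the step you flag as the ``main obstacle'', and not for bookkeeping reasons: with the ansatz $f_i(x)=-x+b_i x^2+c_i x^3+O(x^4)$, where the repeller character of each $f_i$ is decided by the single third-order inequality $V_3(f_i)=-2(b_i^2+c_i)>0$, the desired witnesses do not exist. Indeed, a direct computation gives $f_{3,2,1}(x)=-x+A_2x^2+A_3x^3+O(x^4)$ with $A_2=b_1-b_2+b_3$ and $A_3=c_1+c_2+c_3+2b_1b_2+2b_2b_3-2b_1b_3$, hence $A_2^2+A_3=\sum_{i=1}^{3}(b_i^2+c_i)$, i.e.\ $V_3(f_{3,2,1})=V_3(f_1)+V_3(f_2)+V_3(f_3)$. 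So if all three maps are repellers at third order, the composition has $V_3>0$ and is again a repeller; the same additivity persists for any odd number of orientation-reversing maps, so the obstruction of part (a) does not disappear at $k=3$ as long as stability is decided at order three. This is precisely why the paper's construction (Example~\ref{E-f1f2f3}) uses degree-five maps that are degenerate at third order: each $f_i$ satisfies $V_3(f_i)=0$ and $V_5(f_i)<0$, so its stability is decided by the fifth-order constant of Proposition~\ref{P-stabil-const}, while the composition acquires a nonzero quartic coefficient, $f_{3,2,1}(x)=-x+90x^4-48x^5+O(6)$, which enters $V_5$ through the non-additive cross terms (of type $-3a_2a_4$, $-2a_4^2$) and flips the sign, $V_5(f_{3,2,1})=96>0$. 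The reverse case is obtained from the fifth-order Taylor polynomials of the local inverses (Example~\ref{E-F1F2F3}), and $k>3$ by appending maps $x\mp x^7$, which do not alter the composition modulo $O(x^6)$. None of this is reachable from your quadratic--cubic ansatz, so part (b) needs to be rebuilt along these lines.

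Your part (a) is in the right spirit (reduce to leading Taylor data and sign bookkeeping) but the decisive steps are asserted rather than proved. The claim that in the doubly orientation-reversing case the ``leading correction term has a definite sign'' is exactly what requires work when coefficients vanish to high order; the paper handles this by putting one map in Chen's analytic normal form (Theorem~\ref{T-Chen}) and using Lemma~\ref{L-lema2} (vanishing of the even coefficients and of the stability constants forces vanishing of the odd ones), followed by the three-subcase analysis in the proof of Theorem~\ref{T-main}. Two slips to correct: the identity $h\circ h=(f_1\circ f_2)\circ(f_2\circ f_1)$ for $h=f_2\circ f_1$ is false (one has $h\circ h=f_2\circ(f_1\circ f_2)\circ f_1$, which is what the conjugacy argument should use), and in the repeller case you must rule out the \emph{attractor}, not ``the repeller''. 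Note also that the conclusion is finer than you state: when at least one map preserves orientation the composition is exactly LAS (resp.\ repeller), and when both preserve orientation no cancellation can occur since the leading coefficients have the same sign; the semi-AS outcome only arises when both maps reverse orientation.
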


The situations stated in item $(b)$  of the above theorem when
$k=3,$ happen for instance  in Examples \ref{E-f1f2f3}
and~\ref{E-F1F2F3} of Section \ref{S-example}.

Next we  consider the same problem for  planar maps. Again we must
pay attention to the stable/repeller character  for non-hyperbolic
fixed points. We restrict ourselves to maps with \emph{elliptic}
fixed points. These points are fixed points for which the
eigenvalues of the associated linear part lie in the unit circle,
but excluding the values $\pm 1$. For most of them it is possible to
get their  Birkhoff normal form, which permits to compute the so
called \emph{Birkhoff constants} and from them the \emph{Birkhoff
stability constants}. Using these last constants it is possible to distinguish
between LAS and repeller fixed points. We recall all these concepts
in Section~\ref{ss:kam}.

The problem of the local stability of \emph{parabolic} fixed points
(eigenvalues $\pm1$) is much more involved, see \cite{BF,McG,simo,
Sl} for instance, but we do not need to  use them to get our
results.

Now we can state our main result for  the planar case, that presents
again a Parrondo's type paradox in the two-periodic
setting. Some simple examples of maps $f_1$ and~$f_2$ illustrating
it are given in Examples \ref{Ex-dim2-1} and \ref{Ex-dim2-2} of the next
section.

\begin{teoa}\label{T-atratr-dim2} There exist polynomial maps $f_1$ and~$f_2$
in $\mathbb{R}^2$ sharing a common fixed point $p$ which is a LAS
(resp. a repeller) fixed point for both of them, and such that $p$
is repeller (resp. LAS) for the composition map $f_{2,1}$.
\end{teoa}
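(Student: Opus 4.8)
The plan is to imitate the one-dimensional construction of Theorem A$(b)$ but one dimension lower in the period, exploiting the extra freedom that rotations give in the plane. First I would choose $p=0$ and look for $f_1,f_2$ of the form $f_i(z)=\lambda_i z + (\text{higher order})$ acting on $\C\cong\R^2$, where $\lambda_1=e^{i\alpha}$ and $\lambda_2=e^{-i\alpha}$ so that the composition $f_{2,1}$ has linear part $\lambda_2\lambda_1=1$ — wait, that would give a parabolic point, which the excerpt explicitly says we want to avoid. So instead I would take $\lambda_1=\lambda_2=e^{i\alpha}$ with $\alpha$ a fixed irrational multiple of $2\pi$ (chosen to avoid low-order resonances), so that $0$ is an elliptic fixed point of $f_1$, of $f_2$, and of $f_{2,1}$, whose linear part is $e^{2i\alpha}$. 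The stability of each elliptic point is then governed not by the linear part but by its first nonzero Birkhoff stability constant, as recalled in Section~\ref{ss:kam}: a positive such constant forces a repeller, a negative one forces LAS (or vice versa, depending on the sign convention fixed there).

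The key computation is to express the first Birkhoff stability constant of the composition $f_{2,1}$ in terms of the Birkhoff constants (and the low-order Taylor coefficients) of $f_1$ and $f_2$ separately. In the one-dimensional analytic non-orientable case this is exactly the role played by the ``stability constants'' mentioned in the introduction, and the miracle enabling Theorem A$(b)$ is that the composition's stability constant is \emph{not} simply the sum of those of the factors — there is a genuine cross term coming from the interaction of the Taylor jets. I would carry out the analogous Birkhoff normal form computation for the planar composition: after conjugating $f_1$ and $f_2$ individually toward their normal forms up to the relevant order, compose and read off the resulting resonant coefficient. The cross term will again involve products of the sub-leading coefficients of $f_1$ and $f_2$, and this is precisely the term I would exploit: by keeping each map's own Birkhoff stability constant fixed at a value making $0$ a repeller (resp. LAS), while tuning the free sub-leading polynomial coefficients, I can drive the cross term to dominate and flip the sign for $f_{2,1}$.

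Concretely, I would take $f_1(z)=e^{i\alpha}z + a_2 z^2 + a_3 z^3 + \cdots$ (finitely many terms, to keep everything polynomial) and $f_2(z)=e^{i\alpha}z + b_2 \bar z^2 + \cdots$ or similar, allowing both holomorphic and antiholomorphic monomials so the jets do not commute in the relevant way, then impose the two constraints ``$0$ repeller for $f_1$'' and ``$0$ repeller for $f_2$'' (each an open inequality on the $a$'s, resp. $b$'s), and finally solve the single inequality ``first Birkhoff stability constant of $f_{2,1}$ is negative'' for the remaining free parameters. Since the constraint set is open and the cross term depends on parameters not constrained by the first two conditions, a solution exists; exhibiting an explicit polynomial pair (as promised in Examples~\ref{Ex-dim2-1} and~\ref{Ex-dim2-2}) then just amounts to picking convenient numerical values and verifying the three (in)equalities. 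The reverse direction (LAS for each, repeller for the composition) is symmetric, flipping the signs throughout.

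I expect the main obstacle to be the Birkhoff normal form bookkeeping for the composition: unlike the scalar case, one must track how the conjugacies that simplify $f_1$ and $f_2$ individually interfere when the maps are composed, and at which order the first genuinely resonant term of $f_{2,1}$ appears (it may sit at a higher order than that of $f_1$ or $f_2$ if $2\alpha$ has different resonance behavior than $\alpha$). Getting the resonance condition on $\alpha$ right — so that $f_1$, $f_2$ \emph{and} $f_{2,1}$ are all genuinely elliptic with computable first stability constants, and so that the orders line up to leave a usable cross term — is the delicate step; once $\alpha$ is pinned down, the rest is the kind of open-condition argument sketched above.
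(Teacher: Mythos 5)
Your plan is essentially the paper's own proof: reduce the stability of the elliptic fixed point to the sign of the first Birkhoff stability constant (Lemma \ref{P-dynamicaFN-gen}), compute that constant for $f_1$, $f_2$ and $f_{2,1}$ directly from their cubic jets via the explicit formula (\ref{E-B1-Gen}), and tune free low-order coefficients so that $V_1(f_1)$ and $V_1(f_2)$ have one sign while the cross terms force $V_1(f_{2,1})$ to have the other, the reverse case following by local inverses or a symmetric explicit example. The differences are minor: the paper uses distinct non-$3$-resonant root-of-unity multipliers ($i$ and $e^{i\pi/3}$) instead of an irrational rotation, and it avoids the ``interfering conjugacies'' bookkeeping you worry about by applying the $B_1$ formula directly to the jet of the composed map, which is all that is needed.
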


Combining the  maps that allow to prove  item $(b)$ of Theorem
\ref{T-laslasreprep} and Theorem \ref{T-atratr-dim2} we can prove
the following result that extends these theorems to arbitrary
dimensions.

\begin{teoa}\label{T-Main-A} The following statements hold.

\begin{enumerate}
  \item[(a)] For all $n\geq 1$ there exist  $k\geq 3$ polynomial maps
  $f_i:\mathcal{U}\subseteq\R^{n}\rightarrow\R^{n},$ for
$i\in\{1,\ldots,k\}$, sharing a common fixed point $p$ which is LAS
(resp. repeller) for each map, and such that $p$ is  repeller (resp.
LAS) for the composition map $f_{k,k-1,\ldots,1}$. Furthermore, for
one-dimensional maps ($n=1$), this result is optimal on $k$,  that
is, it is not possible to find only two of such maps such that the
corresponding composition map $f_{2,1}$ satisfies the given
properties.

  \item[(b)] For all $n=2m\geq 2$ there exist $2$ polynomial maps
  $f_1,f_2:\mathcal{U}\subseteq\R^{2m}\rightarrow\R^{2m},$
  sharing a common fixed point $p$ which is LAS
(resp. repeller) for both maps, and such that $p$ is  repeller
(resp. LAS) for the composition map $f_{2,1}$.

\end{enumerate}
\end{teoa}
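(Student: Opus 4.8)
The plan is to prove Theorem~\ref{T-Main-A} by bootstrapping from the already-established one- and two-dimensional results, Theorems~\ref{T-laslasreprep}(b) and~\ref{T-atratr-dim2}, via a product construction. The basic idea is that if $g_1,\ldots,g_k$ are maps on $\R^a$ with a common fixed point $q$ that is LAS (resp. repeller) for each $g_i$ but repeller (resp. LAS) for $g_{k,\ldots,1}$, and $h_1,\ldots,h_k$ are similar maps on $\R^b$ with common fixed point $r$, then the product maps $F_i(x,y)=(g_i(x),h_i(y))$ on $\R^{a+b}$ have the common fixed point $(q,r)$, whose stability type for each $F_i$ and for the composition $F_{k,\ldots,1}=(g_{k,\ldots,1},h_{k,\ldots,1})$ is governed \emph{componentwise}: $(q,r)$ is LAS for $F_i$ iff $q$ is LAS for $g_i$ and $r$ is LAS for $h_i$, and it is a repeller for the composition iff at least one of the two composition components is a repeller (and the orbit does not escape along the stable component — here one uses that the "stable" side is in fact LAS, so nearby orbits stay bounded while the repelling side pushes them away). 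So the job reduces to assembling, for each target dimension $n$, a pair $(a,b)$ with $a+b=n$ and the right parity so that both factor families exist with \emph{matching} behavior.

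For item~(b), $n=2m$: I would take $m$ copies of the planar pair $f_1,f_2$ from Theorem~\ref{T-atratr-dim2}, all with the LAS-for-each / repeller-for-composition behavior (or all with the reverse), and form $F_j(x_1,\ldots,x_m)=(f_j(x_1),\ldots,f_j(x_m))$ for $j=1,2$ on $\R^{2m}$. Then $p=(p_0,\ldots,p_0)$ is LAS for each $F_j$ (product of LAS points) and repeller for $F_{2,1}$ (product of repellers), and swapping the roles gives the resp. statement. For item~(a), $n\ge 1$: when $n$ is odd, write $n=1+(n-1)$ with $n-1$ even; take the $k=3$ one-dimensional maps from Theorem~\ref{T-laslasreprep}(b) on the first coordinate, and on the remaining $n-1=2m'$ coordinates take three maps obtained from the planar pair of Theorem~\ref{T-atratr-dim2}, but we need \emph{three} maps there, not two. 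The clean fix is: on the even block use $h_1=f_1$, $h_2=f_2$, $h_3=\mathrm{Id}$ (so $h_{3,2,1}=f_{2,1}$ on each planar factor), all sharing the common fixed point, with $\mathrm{Id}$ having $p_0$ as a (non-strictly, but acceptable — actually one must be careful here) fixed point. Since $\mathrm{Id}$ has $p_0$ as a fixed point that is stable but not LAS, I would instead use a small perturbation of $\mathrm{Id}$, or better, simply reuse the structure so that $h_3$ is a genuine LAS/repeller map whose presence does not alter the composition's type; the simplest rigorous route is to take $h_1,h_2,h_3$ all equal to maps conjugate to pieces of the $\R^2$ construction whose triple composition inherits the repeller/LAS flip — or to directly build the $\R^n$ example as a product of the one-dimensional $k=3$ example with $m'$ copies of (a $k=3$ version of) the planar example. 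When $n$ is even one can instead just use $n/2$ planar blocks plus, if needed, pad with the one-dimensional block to adjust parity; every case $n\ge 1$ splits as (0 or 1 one-dimensional block) $+$ (some number of planar blocks), and in each block we have a $k\ge 3$ example available (for $k=2$ the one-dimensional obstruction in Theorem~\ref{T-laslasreprep}(a) forces $k\ge3$ in item~(a), consistent with the claimed optimality, which follows directly from part~(a) of Theorem~\ref{T-laslasreprep}).

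The optimality clause for $n=1$ is immediate: it is exactly the content of Theorem~\ref{T-laslasreprep}(a), which says two one-dimensional maps with a common LAS (resp. repeller) fixed point can only produce a point that is LAS (resp. repeller) or semi-AS under composition — never the flipped type. So no additional argument is needed there beyond citing that theorem.

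The main obstacle I expect is the bookkeeping around the "resp." statements and the padding maps: one must ensure that in the product construction the factor on the block we are \emph{not} flipping is a genuine LAS (resp. repeller) point for \emph{every} map in that block's periodic set, including any padding map, so that the product point is LAS (resp. repeller) for each $F_j$ as required — the identity map is not good enough because it makes the common fixed point merely stable, not locally asymptotically stable. The remedy is to never pad with the identity but to use, on each auxiliary block, a full copy of an already-constructed genuine example (one-dimensional or planar) of the appropriate length $k$, possibly after reparametrizing its period to a common $k$ by repeating maps; one checks that repeating a map in the periodic set only composes its (LAS, resp. repeller) local behavior with itself, preserving the type, so a common period $k\ge 3$ can always be arranged across all blocks. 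Once this uniformization of $k$ and the componentwise stability lemma for products are in place, the proof is a short assembly.
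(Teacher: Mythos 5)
Your treatment of item (b) ($m$ uncoupled copies of the planar pair from Theorem~\ref{T-atratr-dim2}) and of the optimality clause (quoting Theorem~\ref{T-laslasreprep}(a)) is exactly the paper's argument and is fine. The gap is in item (a), and it is twofold. First, your componentwise lemma is wrong in the direction you need it: with Definition~1(ii), a product point is \emph{not} a repeller as soon as ``at least one composition component is a repeller''. If one block's composition is LAS and another block's is repelling, any initial point lying on the fiber over the repelling block's fixed point (i.e.\ with zero displacement in the repelling block) converges to the fixed point and never leaves a small ball, so the product point is a saddle-type unstable point, not a repeller. Hence in a mixed decomposition $\R^n=\R\times\R^{n-1}$ you must make \emph{every} block's composition flip, and this is precisely what your construction does not secure. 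Second, the device you propose to get a $k\ge3$ planar family with the flip --- padding with the identity (which you rightly discard) or ``repeating a map in the periodic set'' --- is unjustified: knowing that $f_1$ is LAS and $f_{2,1}$ is repelling tells you nothing a priori about $f_2\circ f_1\circ f_1$ or $f_2\circ f_2\circ f_1$, since composing a repelling composition with one more LAS map can change the stability type (that is the very paradox under study); no Birkhoff-constant computation is offered, and for some orderings it cannot even be read off $V_1$ directly because the linear part becomes $3$-resonant (e.g.\ for $f_2\circ f_1\circ f_1$ built from Example~\ref{Ex-dim2-1} the multiplier is a primitive cube root of unity).

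All of this machinery is unnecessary, and the paper's proof of (a) avoids it: simply place the \emph{same} one-dimensional $k\ge3$ example (the maps of Example~\ref{E-f1f2f3}, completed with $f_j(x)=x-x^7$ for $j\ge4$, resp.\ the maps of Example~\ref{E-F1F2F3} with $g_j(x)=x+x^7$) in \emph{every} coordinate of $\R^n$, i.e.\ $F_j(x_1,\ldots,x_n)=(f_j(x_1),\ldots,f_j(x_n))$. Then each component of $F_{k,\ldots,1}$ equals $f_{k,\ldots,1}$, so the origin is repelling (resp.\ LAS) in every coordinate simultaneously, and the product point is genuinely a repeller (resp.\ LAS); no parity splitting, no planar blocks, and no period uniformization are needed for item (a). If you repair your write-up, either adopt this construction or prove concretely (with Birkhoff constants, checking non-resonance) that your chosen $k=3$ planar family flips, and restate the product lemma correctly: the product is LAS iff all factors are LAS, and is a repeller when all factors are repellers, but a mixed LAS/repeller product is only unstable, not a repeller.
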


Although the above theorem is stated for polynomial maps, using
similar techniques, it is easy to construct  examples with the same
properties but with less regularity, say of class $\mathcal{C}^m$
for any $m\ge 6$ (resp. $m\ge 4$)  in item $(a)$  (resp. item
$(b)$). As we will see, this restriction comes from the use of
normal forms and Taylor expansions involving terms until order five
(resp. order three)  in the construction of our examples.

{F}rom its statement it is natural to wonder if item $(a)$ of the
theorem could be improved for $n\ge3,$  odd, taking $k\ge2.$  We
continue thinking on this question.

The paper is structured as follows. In Section \ref{S-example} we
collect all the explicit examples  used to prove Theorems
\ref{T-laslasreprep} and \ref{T-atratr-dim2} and other that help to
contextualize the  problem that we consider. Section
\ref{S-one-dimension} is devoted to prove the results in the
one-dimensional case. In particular, in Subsection
\ref{ss:estabilitat} we give the expression of the first stability
constants and we prove Theorem~\ref{T-laslasreprep}.  Section
\ref{S-dim-2}  is devoted to prove Theorems~\ref{T-atratr-dim2}
and  \ref{T-Main-A}.

\section{Examples and some definitions}\label{S-example}

We start recalling some definitions, see \cite{DEP,E,L}.

\begin{defi}
A fixed point $p$ of a map $f:\U\subset\R^n\to \U$
is said to be:
\begin{enumerate}
  \item[(i)] \emph{Locally asymptotically stable (LAS)} if it is stable
  and it is locally
  attractive. That is, if given  $\varepsilon>0$, there exists $\delta>0$
  such that if  $||x_0-p|| < \delta$ then $||f^n(x_0)-p|| < \varepsilon$
  for all $n\geq 1$ (estable),
  and there exists $\eta>0$ such that if  $||x_0-p|| < \eta$
  then $\lim\limits_{n\to\infty} f^n(x_0)=p$ (attractive). The point
  is \emph{globally asymptotically stable in $\U$ (GAS)}, if it is LAS and
   $\lim\limits_{n\to\infty} f^n(x_0)=p$ for all $x_0\in \U$.
  \item[(ii)] \emph{Repeller} if there exists $\varepsilon_0>0$ such that for any
  $0<\varepsilon<\varepsilon_0$ and for all  $x_0\ne p$
  such that  $||x_0-p|| < \varepsilon$, there exists $n=n(x_0)\in\N$ such
  that $||f^n(x_0)-p||>\varepsilon$.
\end{enumerate}
If $f$ is a one-dimensional map, the fixed point is called:
\begin{enumerate}
  \item[(iii)] \emph{Semi--asymptotically stable}\footnote{
  Also named \emph{saddle-node} or \emph{shunt}.} (semi-AS) from the left (resp. right)
  if given  $\varepsilon>0$, there exists $\delta$
  such that if  $x_0\in(p-\delta,p)$ (resp. $x_0\in(p,p+\delta)$)  then   $|f^n(x_0)-p| < \varepsilon$
  for all $n\geq 1$, and there exists $\eta>0$ such that if
  $x_0\in(p-\eta,p)$ (resp. $x_0\in(p,p+\eta)$) then
  $\lim\limits_{n\to\infty} f^n(x_0)=p$, and there exists $\eta>0$ such that
  if $x_0\in(p,p+\eta)$ (resp. $x_0\in(p-\eta,p)$) then there exists $n\in\N$ such
  that $|f^n(x_0)-p|>\eta$.
\end{enumerate}
\end{defi}

We remark that for invertible maps, instead of definition $(ii)$ it
is simpler to say that a fixed point  $p$ is a repeller for $f$ if
$p$ is an attractor for $f^{-1}$.

Next we collect several examples that illustrate the main results of
this paper. We start with a one-dimensional example that gives the
clue for proving item $(b)$ of Theorem~\ref{T-laslasreprep}.

\begin{example}\label{E-f1f2f3}
Consider the maps:
\begin{align*}
  f_1(x)&=-x+3x^2-9x^3+164x^5, \\
  f_2(x)&=-x+5x^2-25x^3+1259x^5,\\
  f_3(x)&=-x+2x^2-4x^3+33x^5.
\end{align*}
These maps have been chosen using the expressions of the stability
constants given in Proposition \ref{P-stabil-const} of next section,
in such a way that they satisfy $V_3(f_i)=0$ and $V_5(f_i)<0$ for
$i\in\{1,2,3\}.$ Then, they have a LAS fixed point at the origin
(see Theorem \ref{T-orient-rev}). Moreover,  $
f_{3,2,1}(x)=-x+90x^4-48x^5+O(6). $ Computing  the  stability
constants for this map we obtain that $V_3(f_{3,2,1})=0$ and
$V_5(f_{3,2,1})=96>0.$ Hence, using again
Theorem~\ref{T-orient-rev}, we get that the origin is a repeller
fixed point of $f_{3,2,1}$. In the proof of item $(b)$ of Theorem
\ref{T-laslasreprep} we will explain their whole process of
construction.
\end{example}

Clearly, taking  the local inverses of these maps at the origin
until order five we will have an example of  the other situation
stated in item $(b)$ of Theorem~\ref{T-laslasreprep}, which
precisely is  the one that gives rise to the Parrondo's dynamic
paradox. We present it in  the next example.

\begin{example}\label{E-F1F2F3}
Consider the maps
\begin{align*}
  g_1(x)&= T_5(f^{-1}_3(x))=-x+2x^2-4x^3+31x^5,\\
  g_2(x)&=T_5(f^{-1}_2(x))=-x+5x^2-25x^3+1241x^5,\\
  g_3(x)&=T_5(f^{-1}_1(x))=-x+3x^2-9x^3+160x^5,
\end{align*}
where the $f_j$ are the maps given in Example~\ref{E-f1f2f3} and
$T_5$ means the Taylor polynomial of degree 5 at the origin. These
maps have a local repeller at the origin but the composition map
$g_{3,2,1}(x)=-x+90x^4+48x^5+O(6)$ has an attractor at the origin,
because its Taylor polynomial of degree 5 coincide with the one of
the inverse of $f_{3,2,1}.$ In fact the origin is LAS.
\end{example}

\begin{nota} It is interesting to observe that the order in the
periodic set is very important. For instance, in
Example~\ref{E-F1F2F3}, we have seen that the origin of the
composition map $g_{3,2,1}$ is LAS. Nevertheless, by using the
stability constants, it can be seen that the origin of
$g_{1,2,3}(x)=-x+90x^4-72x^5+O(6)$ is repeller.
\end{nota}

Next example shows that even  when two maps have a common GAS fixed
point, the corresponding composition map does not need to have a LAS
fixed point.

\begin{example} Consider the maps
$$
f_1(x)=\left\{
         \begin{array}{ll}
         -x+x^2& x\leq 1,\\
0 & x>1,
         \end{array}
       \right. \,\mbox{  and }\,f_2(x)=\left\{
         \begin{array}{ll}
         -x+2x^2& x\leq 1/2,\\
0 & x>1/2.
         \end{array}
       \right.
$$
It is easy to check that the origin is a GAS fixed point for both of
them. Their corresponding composition map is
$$
f_{2,1}(x)=f_2\circ f_1(x)=\left\{
         \begin{array}{ll}
0& x<\frac{1-\sqrt{3}}{2},\\
x+x^2-4x^3+2x^4& x\in\left[\frac{1-\sqrt{3}}{2},1\right],\\
0 & x>1.
         \end{array}
       \right.
$$
It is not difficult to see that  the origin is a fixed point,
semi-AS from the left for the composition map (see the  Theorem
\ref{T-orient-pres} in next section).  Moreover this map also has
another fixed point at $x=1-\sqrt{2}/2.$ Hence the origin is neither
a global attractor of $f_{2,1}$ nor stable. Gluing  the three pieces
of this example with some suitable bump functions, it is possible to
obtain differentiable or  $\mathcal{C}^\infty$ examples with the
same features.
\end{example}

To end with the one-dimensional examples, and although it is out of
the periodic systems framework, we consider  non-periodic,
non-autonomous system
\begin{equation}\label{E-infinit-GAS}
x_{n+1}=f_{n+1}(x_n),
\end{equation}
with a non-hyperbolic fixed point $p.$ We will show that it is possible
to find maps $f_n$ sharing this common fixed point $p,$ which is a
GAS for each of them, and such that the system has an unbounded
solution.

\begin{example}
We will construct a family of functions $\{f_n\}_{n\geq 0}$ such
that the origin is GAS for each $f_n$ but  the unbounded sequence
$y_n=(-1)^{n}(n+1)$ for $n\geq 0$ is a solution of
\eqref{E-infinit-GAS}.

Consider the following auxiliary map
$g_a(x)=a\left(\exp(-x/a)-1\right)$. Let $a_0$ be the solution of
the equation $g_a(1)=-2$. That is,
$$
a_0=\frac{2}{2W_{-1}\big(-\exp(-1/2)/2\big)+1}\simeq -0.7959,
$$
where $W_{-1}(x)$ is the secondary branch  of the Lambert
$W$-function, \cite{cor, RO}.

Now we take $f_0:=g_{a_0}$. The map $f_0$ has a GAS fixed point at
the origin\footnote{Since $f_0(-\infty,0)\subset(0,+\infty)$ and
$f_0(0,+\infty)\subset(-\infty,0)$ it is enough to check that if
$x>0$, then for $k\geq 1$ we have $f^{2k}_{0}([0,x])\subset
[0,a_k)$, where $\lim_{k\to+\infty} a_k=0^+$; and  if $x<0$, then
for $k\geq 1$ we have $f^{2k-1}_{0}([0,x])\subset [0,b_k)$, where
$\lim_{k\to+\infty} b_k=0^+$.} which is non-hyperbolic, since
$f_0'(0)=-1$, and it satisfies $ f_0(1)=-2$.

In order to construct the maps $f_n$ for $n\geq 1$, we consider the
following linear conjugations
$$
h_n(x):=\frac{(-1)^{n}}{3}\big((2n+3)x+n\big).
$$
These maps are chosen in such a  way that
$h_n(-2)=(-1)^{n+1}(n+2)={y}_{n+1}$ and $h_n^{-1}({y}_n)=1$. Now, we
define
$$
f_n(x):=h_n\circ f_0\circ h_n^{-1}(x), \, n\geq 1.
$$
Obviously each map has a GAS point at the origin, because they are
conjugate to $f_0$. Finally, observe that for all $n\in\N,$
$$f_n({y}_n)=h_n\circ f_0\circ h_n^{-1}({y}_n)=h_n\circ
f_0(1)=h_n(-2)={y}_{n+1}.$$ Hence the unbounded sequence
$\{y_n\}_{n\in\N}$ is a solution of (\ref{E-infinit-GAS}) with
initial condition $x_0=1$.
\end{example}

We continue this section with some simple linear two-dimensional
examples.

\begin{example}\label{e:lin1}
This  first example shows two linear maps such that for each of them
the origin which is GAS (in fact a super-attracting point), but the
origin is a saddle point of the composition map $f_{2,1}.$ Hence the
origin is an unstable steady state of the periodic system, but not repeller.
Set $\mathbf{x}=(x,y)$ and $f_i(\mathbf{x})=A_i \cdot \mathbf{x}^t$,
where
$$A_1=
\left(
    \begin{array}{cc}
       0 &  2\\
      0 & \frac{1}{2}\\
    \end{array}
  \right)
  \,\mbox{ and }\, A_2=
\left(
    \begin{array}{cc}
       \frac{1}{2} &  0\\
      2 &  0\\
    \end{array}
  \right).
$$
Observe that
$\mbox{Spec}\left(A_1\right)=\mbox{Spec}\left(A_2\right)=
\{0,1/2\},$ so the origin is  GAS for the dynamical systems
associated to $f_1$ and $f_2$. The corresponding composition map
associated to the $2$-periodic system is
$f_{2,1}(\mathbf{x})=A_{2,1} \cdot \mathbf{x}^t$ where
$$A_{2,1}:=A_2\cdot A_1=\left(
    \begin{array}{cc}
       0 &  1\\
      0 &  4\\
    \end{array}
  \right).
$$
Since $\mbox{Spec}\left(A_{2,1}\right)=\{0,4\},$  the origin is a
saddle point for $f_{2,1}.$
\end{example}

\begin{example}\label{e:lin2} In \cite{BTT} the authors consider
the maps $f_i(\mathbf{x})=A_i \cdot \mathbf{x}^t$, $i=1,2$ where
$$A_1=\alpha\,
\left(
    \begin{array}{cc}
       1 &  1\\
      0 & 1\\
    \end{array}
  \right)\,\mbox{ and }\,A_2=\alpha\,
\left(
    \begin{array}{cc}
       1 &  0\\
      1 &  1\\
    \end{array}
  \right),
$$
with $|\alpha|<1$. Both maps have the origin as a GAS point because
$\mbox{Spec}\left(A_1\right)=\mbox{Spec}\left(A_2\right)=
\{\alpha\}$.  Then the composition map is
$f_{2,1}(\mathbf{x})=A_{2,1} \cdot \mathbf{x}^t$ with
$$A_{2,1}=\alpha^2\, \left(
    \begin{array}{cc}
       1 &  1\\
      1 &  2\\
    \end{array}
  \right),$$
and it is such that $\mbox{Spec}\left(A_{2,1}\right)=\{\left( 3\pm
\sqrt {5} \right) {\alpha}^{2}/2\}.$ Hence the origin is either GAS
if $|\alpha|<(\sqrt{5}-1)/2\simeq 0.618$, or a saddle point if
$(\sqrt{5}-1)/2<|\alpha|<1$. Again the stability can be lost but
again  no repeller fixed points appear.
\end{example}

Another nice linear example is given
 in \cite[p. 8]{J}. There the author  shows two linear maps, both  with a stable
 focus at the origin, and such
 that the corresponding composition map has again a saddle at the origin.

We end this section  with two planar examples that show that taking
a periodic set with only two elements with non-hyperbolic fixed
points, the stabilities of the common fixed point can be reversed
for the corresponding composition maps. In fact they allow
 to prove Theorem \ref{T-atratr-dim2}. Their constructions are detailed in Section
 \ref{proofTB}.

\begin{example}\label{Ex-dim2-1} Consider the maps
\begin{align*}
f_1(x,y)&=\left(-y+2{x}^{2}+6xy,x-3{x}^{2}+2xy+3{y}^{2}\right),\\
f_2(x,y)&=\left(\frac{x}{2}-\frac{\sqrt{3}}{2}y-x\,(x^2+y^2),
\frac{\sqrt{3}}{2}x+\frac{1}{2}y-y\,(x^2+y^2)\right).
\end{align*}
As we will see in the proof of Theorem \ref{T-atratr-dim2} in
Section \ref{S-dim-2}, the origin is a LAS fixed point for both maps
$f_1$ and $f_2$, because their Birkhoff stability constants are
$V_1(f_1)=V_1(f_2)=-1/2<0.$ However the origin is a repeller fixed
point for the composition map $f_{2,1}$ because
$V_1(f_{2,1})=(3\sqrt{3}-5)/2\simeq 0.098>0$.
\end{example}

Each map $f_i$ is locally invertible in a neighborhood of the
origin. Hence, as we did for passing from Example~\ref{E-f1f2f3} to
Example~\ref{E-F1F2F3}, by taking their local inverses $f^{-1}_i,$
we have maps with a repeller fixed point at the origin such that
their composition map has a LAS fixed point at the origin, giving an
example of the remaining case considered in
Theorem~\ref{T-atratr-dim2}. Anyway, we also include an explicit
independent example.

\begin{example}\label{Ex-dim2-2}
The origin is a repeller fixed point for both maps
\begin{align*}
f_1(x,y)&=\left(-y+\frac{1}{3}x^2-8xy+\frac{5}{3}y^2,x+4x^2-\frac{4}{3}xy-
4y^2\right),\\
f_2(x,y)&=\left(\frac{x}{2}-\frac{\sqrt{3}}{2}y+x(x^2+y^2),
\frac{\sqrt{3}}{2}x+\frac{1}{2}y+y(x^2+y^2)\right),
\end{align*}
because their corresponding Birkhoff stability constants are
$V_1(f_1)=V_1(f_2)=1/2>0$. Now the origin is a LAS fixed point for
$f_{2,1}$, because $V_1(f_{2,1})=3-2\sqrt{3}\simeq -0.464<0$.
\end{example}

\section{One-dimensional maps}\label{S-one-dimension}

\subsection{Stability of fixed points}\label{ss:estabilitat}

In this section we consider one-dimensional analytic maps with a
fixed point that, without loss of generality, we take as the origin
and we denote by $\U$ a neighborhood of this point. As we have
already mentioned, it is clear that, from the view point of the
stability problem for composition, the more interesting maps are the
ones having non-hyperbolic fixed points. A summary of several
results concerning this situation can also be found in \cite{DEP}.
Next we recall some of them and also develop some new results for
the orientation reversing case.

 This first result is well-known and
characterizes the local dynamics at a non-hyperbolic non-oscillatory
fixed points  one-dimensional maps $f$ (i.e. $f'(0)=1$, that is,
when $f$ is \emph{locally orientation preserving}):

\begin{teo}[\cite{DEP}]\label{T-orient-pres}
Let $f$ be a $\mathcal{C}^{m+1}(\mathcal{U})$ function such that
$f(0)=0$, $f'(0)=1$ and
$$
f(x)=x+a_m x^m+O(m+1),\mbox{ with } a_m\neq 0,\,m\geq 2.
$$
Then:
\begin{itemize}
  \item[(a)] If $m$ is even then the origin is semi-AS from the left if
  $a_m>0$ and from the right if $a_m<0$.
  \item[(b)] If $m$ is odd then the origin is  repeller if
  $a_m>0$ and  LAS if $a_m<0$.
\end{itemize}
\end{teo}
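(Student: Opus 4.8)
The plan is to prove Theorem~\ref{T-orient-pres} by a direct analysis of the scalar iteration near the origin, reducing everything to the sign of the dominant term $a_mx^m$. Write $f(x)=x+a_mx^m+g(x)$ where $g(x)=O(x^{m+1})$, so that for $|x|$ small one has $f(x)-x=a_mx^m\big(1+o(1)\big)$. The heart of the argument is that, for $x$ in a sufficiently small one-sided neighborhood of $0$, the sign of $f(x)-x$ is constant and equal to the sign of $a_mx^m$; hence the orbit $\{f^n(x)\}$ is monotone (increasing if $f(x)>x$, decreasing if $f(x)<x$) as long as it stays in that neighborhood. A bounded monotone real sequence converges, and its limit must be a fixed point of $f$; since $0$ is the only fixed point in a small enough neighborhood (because $f(x)-x=a_mx^m(1+o(1))\ne 0$ for $0<|x|$ small), the limit is $0$ whenever the orbit cannot escape.

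First I would fix $\delta>0$ small enough that on $(-\delta,\delta)\setminus\{0\}$ we have $\tfrac12|a_m||x|^m\le |f(x)-x|\le \tfrac32|a_m||x|^m$ and $\operatorname{sign}(f(x)-x)=\operatorname{sign}(a_mx^m)$, and also (shrinking $\delta$ further if needed) that $f$ maps a suitable subinterval into itself in the attracting cases. Then I would split into the four sign/parity cases. For part (a), $m$ even: $x^m>0$ for $x\ne0$, so $\operatorname{sign}(f(x)-x)=\operatorname{sign}(a_m)$ on both sides of $0$. If $a_m>0$ then $f(x)>x$ everywhere near $0$; on the left side $(-\delta,0)$ the orbit is increasing and bounded above by $0$, hence converges to $0$ — this is attraction from the left; on the right side $f(x)>x$ pushes orbits away, and one checks an orbit started at small $x>0$ strictly increases until it exits a fixed small neighborhood, giving the repelling-from-the-right behavior required by semi-AS. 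The case $a_m<0$ is the mirror image (attraction from the right, repulsion from the left). For part (b), $m$ odd: now $\operatorname{sign}(x^m)=\operatorname{sign}(x)$, so $\operatorname{sign}(f(x)-x)=\operatorname{sign}(a_m x)$. If $a_m<0$, then for $x>0$ small $f(x)<x$ and for $x<0$ small $f(x)>x$: orbits on each side are monotone toward $0$ and, since $f$ maps a small symmetric interval into itself (because $|f(x)|<|x|$ near $0$, which follows from $|f(x)-x|\le\tfrac32|a_m||x|^m=o(|x|)$ together with the sign information showing $f(x)$ lies strictly between $0$ and $x$), the origin is LAS. If $a_m>0$, the inequalities reverse and every nonzero orbit near $0$ strictly increases in modulus until leaving a fixed neighborhood, so the origin is repeller.

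The only slightly delicate point — and the main thing requiring care rather than routine estimation — is the bookkeeping that in the attracting cases the relevant interval is genuinely invariant, so that the monotone orbit never leaves the region where the sign estimates hold, and in the repelling cases that the strictly increasing modulus really does force escape past a \emph{fixed} $\varepsilon_0$ (not one depending on the starting point): this is where one must be careful to quantify $\delta$ before choosing the initial condition, rather than the other way around. Both are handled by first fixing $\varepsilon_0=\delta$, then observing that once $|f^n(x)|$ has increased, it cannot decrease back (monotonicity of the modulus in the repelling case), so the increments $|f^{n+1}(x)|-|f^n(x)|\ge \tfrac12|a_m|\,|f^n(x)|^m$ are bounded below as long as the orbit stays in $(-\delta,\delta)$, forcing it to leave in finitely many steps. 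I would also note that $\mathcal{C}^{m+1}$ regularity is used only to guarantee $g(x)=O(x^{m+1})$ via Taylor's theorem, so that the $o(|x|^m)$ remainder estimates above are valid; no smoothness beyond that is needed.
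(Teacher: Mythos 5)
Your proposal is correct, but note that the paper itself offers no proof of this statement: Theorem~\ref{T-orient-pres} is quoted as a known result from \cite{DEP}, so there is no in-paper argument to compare against, and your write-up is essentially the standard proof that such references contain. The skeleton is sound: the $\mathcal{C}^{m+1}$ hypothesis and Taylor's theorem give, on a punctured $\delta$-neighborhood, the bounds $\tfrac12|a_m||x|^m\le|f(x)-x|\le\tfrac32|a_m||x|^m$ together with $\sign(f(x)-x)=\sign(a_mx^m)$; since $|f(x)-x|<|x|$ there, orbits cannot cross $0$, so on each side of the origin they are strictly monotone while they remain in $(-\delta,\delta)$. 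On an attracting half-line the orbit is monotone, trapped between its initial point and $0$, hence convergent to a fixed point, and $0$ is the only fixed point in the neighborhood; this yields both the stability and the attraction required in the definitions of LAS and semi-AS. On a repelling half-line your increment bound $|x_{n+1}|-|x_n|\ge\tfrac12|a_m|\,|x_n|^m\ge\tfrac12|a_m|\,|x_0|^m$ forces escape from $(-\delta,\delta)$ in finitely many steps (alternatively: a monotone orbit confined to $(0,\delta)$ would converge to a nonzero fixed point, a contradiction), and you correctly fix $\varepsilon_0=\delta$ before choosing the initial condition, which is precisely the quantifier issue that needs care in the definition of repeller. The four parity/sign cases then follow as you describe. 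So the argument is complete in outline; the only difference from the paper is that the paper delegates this proof entirely to the literature, whereas you give a self-contained elementary one.
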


The complete study of the local dynamics at a non-hyperbolic
oscillatory fixed point (i.e. when $f(0)=0$ and $f'(0)=-1$, that is
when $f$ is \emph{locally orientation reversing}), is more involved.
In \cite[Thm 5.1]{DEP} a result is given in terms of the derivatives
of the orientation preserving map $f^2=f\circ f$, by using Theorem
\ref{T-orient-pres}. However in \cite[Thm 5.4]{DEP}, to avoid using
these derivatives the authors present a slightly more explicit
expressions obtained using the Fa\`a di Bruno Formula
(\cite{Johnson}). The expressions in \cite[Thm 5.1]{DEP} are closely
related with what we call \emph{stability constants}, that we
introduce below.

Given a  $\mathcal{C}^{\omega}(\mathcal{U})$ function of the form
$$
f(x)=-x+\sum\limits_{j\geq 2} a_j x^j,
$$
one obtains
\begin{equation*}
f^{2}(x):=f\circ f(x)=x+\sum\limits_{j\geq 3} W_j(a_2,\ldots,a_j)
x^j.
\end{equation*}

If $f$ is not an involution (i.e. $f^{2}\neq\mathrm{Id}$), we define
the \emph{stability constant of order} $\ell\geq3,$ as $V_\ell,$
where
\begin{equation*}
V_3:=W_3(a_2,a_3)\quad\mbox{and}\quad
      V_\ell:=W_\ell(a_2,\ldots,a_\ell) \,\hbox{ if }\, W_j=0,\,j=3,\ldots,\ell-1.
\end{equation*}

Next result shows that the first non-zero stability constant is for
$\ell$ odd and gives the stability of the fixed point.

\begin{teo}\label{T-orient-rev}
Let $f$ be an analytic map in $\mathcal{U}\subseteq\R$ such that
$f(0)=0$, $f'(0)=-1$. If $f$ is not an involution, then there exists
$m\geq 1$ such that $V_3=V_4=V_5=\cdots=V_{2m}=0$ and  $V_{2m+1}\neq
0$. Moreover, if $V_{2m+1}< 0$  (resp. $V_{2m+1}> 0$),  the origin
is LAS (resp. repeller).
\end{teo}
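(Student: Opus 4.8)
The plan is to reduce the orientation-reversing case to the orientation-preserving case already handled by Theorem~\ref{T-orient-pres}, applied to the second iterate $f^2$. Since $f'(0)=-1$, the map $g:=f^2$ satisfies $g(0)=0$ and $g'(0)=1$, so $g$ is locally orientation preserving and Theorem~\ref{T-orient-pres} applies to it. Writing $f(x)=-x+\sum_{j\ge 2}a_jx^j$ and $g(x)=f^2(x)=x+\sum_{j\ge 3}W_j(a_2,\dots,a_j)x^j$, the stability constant $V_{2m+1}$ is by definition the first nonzero coefficient $W_\ell$ of $g-\mathrm{Id}$. The stability of the origin for $f$ is the same as for $g=f^2$ (a neighborhood basis is preserved under one application of $f$ because $f$ is a local homeomorphism fixing $0$, so orbits of $f$ stay near $0$ iff orbits of $f^2$ do, and likewise for convergence and for the repeller property). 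Hence once we know the index $\ell$ of the first nonzero $W_\ell$ and its sign, Theorem~\ref{T-orient-pres} gives: if $\ell$ is odd, the origin is LAS when $W_\ell<0$ and repeller when $W_\ell>0$; if $\ell$ is even, the origin is semi-AS for $g$, but we must rule this out.

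So the heart of the proof is the parity claim: the first nonzero $W_\ell$ occurs at an \emph{odd} index $\ell=2m+1$. I would prove this by exploiting the symmetry coming from $f$ being orientation reversing, more precisely from $f$ being conjugate to its own inverse in an appropriate graded sense. The clean way: since $f'(0)=-1$, there is a formal (indeed analytic, by a classical normal-form argument for non-hyperbolic orientation-reversing germs, as in \cite{DEP}) change of coordinates conjugating $f$ to a map of the form $h(x)=-x+cx^{2m+1}+\cdots$ commuting with $x\mapsto -x$ up to the relevant order, equivalently $f^2$ is conjugate to $x+\tilde c x^{2m+1}+\cdots$; the point is that $f^2$ inherits an odd leading term because $f$ anticommutes, to leading order, with the involution $x\mapsto-x$. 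Concretely, if $W_3=\dots=W_{2m}=0$ then near the origin $f$ behaves like an orientation-reversing map whose square has vanishing low-order corrections, and the first correction to $f^2$ must be odd: one verifies this directly from the Faà di Bruno / composition formulas, checking that $W_{2k}$ is forced to vanish whenever $W_3=\dots=W_{2k-1}=0$, because the even-order coefficient of $f^2$ is an odd function of the ``sign'' data that an orientation-reversing square cannot sustain. This last verification is the main obstacle, and it is essentially a bookkeeping computation with the coefficients $W_j(a_2,\dots,a_j)$; I expect it to be the place where one must be careful, and it is presumably done (or cited from \cite{DEP}) via the explicit low-order expressions of the $W_j$ collected in Proposition~\ref{P-stabil-const}.

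Assembling the pieces: first establish that $f^2$ is not the identity implies there is a genuine first nonzero stability constant (otherwise $f^2=\mathrm{Id}$, contradicting the hypothesis that $f$ is not an involution); then invoke the parity claim to get $\ell=2m+1$ odd; then apply Theorem~\ref{T-orient-pres}(b) to $g=f^2$ with $a_{2m+1}=W_{2m+1}=V_{2m+1}$, obtaining that the origin is LAS for $f^2$ if $V_{2m+1}<0$ and repeller for $f^2$ if $V_{2m+1}>0$; finally transfer these conclusions from $f^2$ back to $f$ using that $f$ is a local homeomorphism fixing the origin, so LAS, repeller and stability are all properties shared by $f$ and $f^2$. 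I would spell out the transfer step carefully since it is where the orientation-reversing nature of $f$ (orbits alternate sides) makes the statement slightly less obvious than in the orientation-preserving case, but it follows from standard arguments: $x$ stays in a small interval under iteration of $f$ iff both $x$ and $f(x)$ stay in that interval under iteration of $f^2$, and $f$ contracts a neighborhood of $0$ onto a neighborhood of $0$.
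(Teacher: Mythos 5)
Your overall reduction coincides with the paper's: pass to $g=f^2$, which is orientation preserving, apply Theorem~\ref{T-orient-pres}(b) to the first nonzero coefficient of $g-\mathrm{Id}$ (which exists precisely because $f$ is not an involution and $f^2$ is analytic), and transfer LAS/repeller back from $f^2$ to $f$. The transfer step is standard and you treat it adequately (the paper does not even spell it out). So the structure is right.

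The gap is the parity claim, which is the actual content of the theorem, and you do not prove it. You sketch two routes (a normal-form conjugation and a ``Fa\`a di Bruno bookkeeping'' verification), call the key verification ``the main obstacle,'' and then defer it as ``presumably done (or cited from \cite{DEP})'' via Proposition~\ref{P-stabil-const}; but that proposition only lists $V_3,\dots,V_{11}$, so it cannot settle the claim at all orders, and the remark that the even-order coefficient is ``an odd function of the sign data'' is not an argument. The claim does admit short proofs, and the paper gives a purely dynamical one: if the first nonzero coefficient had even order, say $f^2(x)-x=V_{2m}x^{2m}+O(2m+1)$ with $V_{2m}>0$, then on a punctured neighborhood $f$ is strictly decreasing and $f^2(x)>x$ on \emph{both} sides of the origin; for an orbit $x_n=f^n(x_0)$ one has $x_2>x_0$, and applying the decreasing map $f$ yields $x_3<x_1$, i.e.\ $f^2(x_1)<x_1$, a contradiction (the case $V_{2m}<0$ is symmetric). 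Alternatively, your ``anticommutes to leading order'' intuition can be made precise in one line: if $f^2(x)=x+W_\ell x^\ell+O(\ell+1)$ with $W_\ell\neq 0$, comparing the order-$\ell$ terms in $f\circ f^2=f^2\circ f$ gives $(-1)^\ell W_\ell=-W_\ell$, so even $\ell$ would force $W_\ell=0$. (Invoking Chen's normal form, Theorem~\ref{T-Chen}, where $m$ is even in the reversing case, together with the conjugation invariance of the order of $f^2-\mathrm{Id}$, would also work, but your write-up does not carry this out.) Any of these closes the gap; without one of them the proof is incomplete.
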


\begin{proof} We start proving that
the first non-zero stability constant has odd order. Suppose, to
arrive to a contradiction, that $f^{2}(x)-x=V_{2m}x^{2m}+O(2m+1)$
with $V_{2m}\ne 0.$ Assume, for instance that $V_{2m}>0.$ Then we
can consider a neighborhood of the origin $\tilde{\U}\subseteq \U$
such that for all $x\in\tilde{\U}\setminus\{0\},$ $f$ is strictly
monotonically decreasing and $f^2(x)-x>0.$

Let $x_0\in\tilde{\U}\setminus\{0\}$ and consider its orbit
$x_n=f^n(x_0).$ We take $x_0$ small enough with
$x_1,x_2,x_3\in\tilde{\U}\setminus\{0\}.$ We have that
$x_2-x_0=f^2(x_0)-x_0>0.$ Since $f$ is decreasing, $f(x_2)<f(x_0),$
that is, $f^2(x_1)<x_1,$ a contradiction.

Now, the theorem is a direct corollary of statement $(b)$ of Theorem
\ref{T-orient-pres}.~\end{proof}

Finally,  we give an expression of some stability constants. It is
clear that the regularity of the function can be weakened in their
computation, because the only needed tools are  the Taylor
expansions at the origin.

\begin{propo}\label{P-stabil-const} Let $f$ be a
 $\mathcal{C}^{12}(\mathcal{U})$ function such that $f(0)=0$,
$f'(0)=-1$. Then the first stability constants are
  \begin{align*}
  V_3=& 2\left(-{a_{{2}}}^{2}-a_{{3}}\right),\\
  V_5=& 2\left(2 {a_{{2}}}^{4}-3 a_{{2}}a_{{4}}-a_{{5}}\right),\\
  V_7=& 2\left(-13 {a_{{2}}}^{6}+18 {a_{{2}}}^{3}a_{{4}}-4 a_{{6}}a_{{2}}-2 {a_{{
4}}}^{2}-a_{{7}}
\right),\\
  V_9=& 2\left(145 {a_{{2}}}^{8}-221 {a_{{2}}}^{5}a_{{4}}+35 {a_{{2}}}^{3}a_{{6}}+
50 {a_{{2}}}^{2}{a_{{4}}}^{2}-5 a_{{2}}a_{{8}}-5 a_{{6}}a_{{4}}-a_{
{9}}
\right),\\
  V_{11}=& 2\left(-2328 {a_{{2}}}^{10}+3879 {a_{{2}}}^{7}a_{{4}}-561 {a_{{2}}}^{5}a_{
{6}}-1263 {a_{{2}}}^{4}{a_{{4}}}^{2}+61 {a_{{2}}}^{3}a_{{8}}+171 {a
_{{2}}}^{2}a_{{4}}a_{{6}}+\right.\\&\left. \quad 55
a_{{2}}{a_{{4}}}^{3} -6 a_{{2}}a_{{10}}- 6 a_{{4}}a_{{8}}-3
{a_{{6}}}^{2}-a_{{11}} \right),
  \end{align*}
\end{propo}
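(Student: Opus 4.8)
The plan is to compute the stability constants $V_\ell$ directly from the definition: given $f(x)=-x+\sum_{j\ge 2}a_jx^j$, form the composition $f^2=f\circ f$, expand it as a formal power series, and read off the coefficients $W_j(a_2,\ldots,a_j)$ of $x^j$ in $f^2(x)-x$. By Theorem~\ref{T-orient-rev} we already know that the even-order $W_j$ vanish whenever all the preceding ones do, so the only content here is the explicit polynomial expressions for $V_3,V_5,V_7,V_9,V_{11}$ in terms of the $a_j$. Because $V_5$ is defined only under the hypothesis $V_3=0$, i.e. $a_3=-a_2^2$, one substitutes this relation before simplifying; similarly $V_7$ is computed under $V_3=V_5=0$, which additionally forces $a_5=2a_2^4-3a_2a_4$, and so on inductively. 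Thus each successive constant is obtained by plugging the previously derived constraints into the raw coefficient $W_\ell$.

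Concretely, I would first write $u=f(x)=-x+a_2x^2+a_3x^3+\cdots$ and then substitute into $f$ again: $f^2(x)=-u+a_2u^2+a_3u^3+\cdots$. Collecting powers of $x$ up to $x^{11}$ is a finite, if tedious, symbolic computation—exactly the kind of thing the authors note ``only needs Taylor expansions at the origin.'' The term $-u$ contributes $x-a_2x^2-a_3x^3-\cdots$, which already cancels the leading $x$ against the $+x$ in $f^2(x)-x$ up to sign bookkeeping; the quadratic term $a_2u^2$ reintroduces an $x^2$ contribution $a_2x^2$ that cancels the $-a_2x^2$, confirming $W_2=0$ identically (consistent with $f'(0)=-1$). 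Continuing, the coefficient of $x^3$ gives $W_3=-2a_2^2-2a_3$, hence $V_3=2(-a_2^2-a_3)$. For the higher constants one organizes the computation by powers of $x$, keeping track only of the monomials in $a_2,\ldots,a_\ell$ that can appear, and then imposes the vanishing of lower constants to reduce to the stated forms.

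The main obstacle is purely computational: the expansion of $f^2$ to order $11$ produces a large number of terms, and the substitutions enforcing $V_3=\cdots=V_{2m-1}=0$ must be carried out carefully to avoid algebraic slips. In practice this is best done with a computer algebra system—one defines $f$ as a truncated series with symbolic coefficients, computes $f(f(x)) - x$, extracts the coefficient of $x^\ell$, and then repeatedly uses the relations $a_3=-a_2^2$, $a_5=2a_2^4-3a_2a_4$, $a_7=-13a_2^6+18a_2^3a_4-4a_2a_6-2a_4^2$, etc. (each of which is exactly ``$V_{2j+1}=0$ solved for $a_{2j+1}$''). After simplification one reads off the displayed polynomials. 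There is no conceptual difficulty beyond this bookkeeping; the regularity hypothesis $\mathcal{C}^{12}(\mathcal{U})$ is simply what is needed to make the Taylor expansion to order $11$ (hence the coefficient of $x^{11}$ in $f^2$) well-defined, and could be weakened to $\mathcal{C}^{\ell}$ if one only wants $V_\ell$.
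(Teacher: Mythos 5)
Your proposal is correct and follows essentially the same route as the paper: expand $f^{2}=f\circ f$ in a Taylor series, read off the coefficients $W_\ell$, note that each $V_{2m+1}$ contains $a_{2m+1}$ linearly (with coefficient $-2$), and obtain each successive constant by solving the previous $V_{2m+1}=0$ for $a_{2m+1}$ and substituting into $W_{2m+3}$ — exactly the recursive substitution the authors describe, carried out symbolically. Your remark that the even-order coefficients need not be imposed separately (they vanish automatically by Theorem~\ref{T-orient-rev}) and that the regularity hypothesis only serves to make the order-$11$ Taylor coefficient well-defined are both consistent with the paper.
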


\begin{proof}
The constants have been obtained by computing the first coefficient
of the Taylor expansion of $f^2$. It is easy to check that  any
constant $V_{2m+1}$ contains the monomial $-2 a_{2m+1}$. Hence, once
$V_{2m+1}$ is obtained, the constant $V_{2m+3}$ is computed by
solving $V_{2m+1}=0$ with respect the coefficient $a_{2m+1}$ and
plugging this value in the expression of $W_{2m+3}$.~\end{proof}

We stress that more constants can be easily obtained with very few
computing time.

\subsection{Proof of Theorem \ref{T-laslasreprep}}

Observe that the statement $(a)$ of Theorem \ref{T-laslasreprep} is
a consequence of the following  result.

\begin{teo}\label{T-main} Consider two analytic maps
    $f_{i}:\mathcal{U}\subseteq\R\to\U$, $i=1,2$ having a common non-hyperbolic fixed
    point $p\in\U$ which is LAS (resp. repeller) for both of them. Then  $p$ is either
    LAS (resp. repeller) or semi-AS for the composition map
    $f_{2,1}=f_2\circ f_1$. More precisely,
    \begin{itemize}
        \item[(a)] If one of the maps $f_i$ preserves orientation, then $p$ is LAS (resp. repeller)
         for $f_{2,1}$.
        \item[(b)] If both $f_1$ and $f_2$ reverse orientation, then $p$ can be either a
        LAS (resp. repeller) or a semi-AS fixed point for $f_{2,1}$.
    \end{itemize}
\end{teo}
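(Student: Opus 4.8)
The plan is to reduce everything to the one-dimensional stability machinery of Theorems~\ref{T-orient-pres} and~\ref{T-orient-rev}, separating the analysis according to how many of the two maps reverse orientation. Since $p$ is non-hyperbolic and one-dimensional, each $f_i$ has $f_i'(p)=\pm1$; without loss of generality take $p=0$. I would first dispose of the degenerate subcases: if either $f_i$ is (locally) the identity or an involution the statement is either trivial or must be handled separately, so assume from now on that neither $f_i$ nor $f_{2,1}$ is the identity and that the relevant second iterates are not the identity either.

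First, the orientation-preserving subcase (item (a)). If, say, $f_1'(0)=1$, then since $0$ is LAS (resp. repeller) for $f_1$, Theorem~\ref{T-orient-pres}(b) forces $f_1(x)=x+a_mx^m+O(m+1)$ with $m$ odd and $a_m<0$ (resp. $a_m>0$); in particular $f_1$ is a local contraction (resp. expansion) toward $0$ in the strong sense that on a punctured neighborhood $0<|f_1(x)|<|x|$ (resp. $|f_1(x)|>|x|$). Now split on the sign of $f_2'(0)$. If $f_2'(0)=1$ as well, then $f_2$ has the same one-sided monotone behaviour, and composing two maps that each strictly decrease $|x|$ (resp. increase $|x|$) on a punctured neighbourhood gives a map $f_{2,1}$ with $0<|f_{2,1}(x)|<|x|$ (resp. $>|x|$), hence $0$ is LAS (resp. repeller) for $f_{2,1}$ directly from the definition — no stability-constant computation needed. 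If instead $f_2'(0)=-1$, then $f_2$ is a local homeomorphism of a neighbourhood of $0$ with $f_2(x)$ having sign opposite to $x$ to leading order; the LAS (resp. repeller) hypothesis on $f_2$ together with Theorem~\ref{T-orient-rev} says $f_2^2(x)=x+V_{2\ell+1}x^{2\ell+1}+O(2\ell+2)$ with $V_{2\ell+1}<0$ (resp. $>0$). The cleanest argument here is: $f_{2,1}'(0)=f_2'(0)f_1'(0)=-1$, so $f_{2,1}$ reverses orientation and $0$ is LAS (resp. repeller) for $f_{2,1}$ iff the first nonzero stability constant $V_{2m+1}(f_{2,1})$ is negative (resp. positive), by Theorem~\ref{T-orient-rev}. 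To get its sign I would pass to $f_{2,1}^2 = f_2\circ(f_1\circ f_2)\circ f_1$ and compare it, via conjugacy-type manipulations, with $f_2^2$; concretely, note $f_{2,1}^2 = f_2 \circ (f_{1}^2)^{?}\cdots$ — better, observe that $0<|f_1(x)|<|x|$ on a punctured neighbourhood (the LAS case) lets one sandwich the orbit of $f_{2,1}$ between orbits controlled by $f_2$: starting from $x_0$, applying $f_1$ moves strictly inward, then $f_2$ behaves like an orientation-reversing map that is asymptotically stable, so the whole composition inherits local attraction; the repeller case is the mirror image (or follows by passing to local inverses). This ``monotone sandwiching'' is the heart of (a) and is the step I expect to require the most care, because one must make the inward/outward estimates uniform on a neighbourhood and commute them past the orientation-reversing $f_2$ without losing control of signs.

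Second, the both-orientation-reversing subcase (item (b)). Here $f_1'(0)=f_2'(0)=-1$, so $f_{2,1}'(0)=+1$, i.e. $f_{2,1}$ preserves orientation, and by Theorem~\ref{T-orient-pres} its fixed point is LAS, repeller, or semi-AS according to the parity and sign of the first nonzero Taylor coefficient of $f_{2,1}(x)-x$ — and crucially all three can occur. To show this I would exhibit the phenomenon by a direct computation: write $f_i(x)=-x+a_2^{(i)}x^2+a_3^{(i)}x^3+\cdots$, impose via Theorem~\ref{T-orient-rev} / Proposition~\ref{P-stabil-const} that each $f_i$ has its first nonzero stability constant negative (so each is LAS), and then compute $f_{2,1}(x)-x = c_M x^M + O(M+1)$ symbolically; one finds that $M$ can be even (forcing semi-AS regardless of sign, giving the genuinely new ``in-between'' behaviour) or odd with either sign, and concrete polynomial choices of the $a_j^{(i)}$ realize each case — this is exactly the mechanism behind Examples~\ref{E-f1f2f3}–\ref{E-F1F2F3} once one sees that composing two reversing maps yields a preserving map of order possibly $4$. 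The statement that LAS (resp. repeller) is attainable for $f_{2,1}$ is then witnessed by taking the two reversing maps to be close to genuine contractions (resp. expansions), and the statement that semi-AS is attainable follows by tuning the coefficients so that the leading surviving term of $f_{2,1}(x)-x$ is of even order. Throughout, the repeller-hypothesis versions of every claim follow from the LAS versions by replacing each $f_i$ with its local inverse at $0$ (which swaps LAS $\leftrightarrow$ repeller and preserves/reverses orientation exactly as $f_i$ does, and sends $f_{2,1}$ to $(f_1^{-1})\circ(f_2^{-1}) = (f_{2,1})^{-1}$), so I would prove one side and invoke this symmetry for the other.
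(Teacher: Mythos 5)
The fatal problem is in your treatment of item (b). Read together with the theorem's preamble, item (b) is not a pure realizability statement: it asserts that when both $f_1$ and $f_2$ reverse orientation and have a LAS fixed point at $p$, the composition $f_{2,1}$ is \emph{always} either LAS or semi-AS at $p$ --- the repeller outcome is excluded --- and this exclusion is the substantive content of the theorem (it is exactly what makes item (a) of Theorem~\ref{T-Main-A} optimal for $n=1$, i.e.\ why the one-dimensional Parrondo paradox needs $k\ge3$). Your plan does not prove this: you only propose to exhibit concrete polynomial pairs realizing LAS and semi-AS, and, worse, you assert that ``all three can occur'' and that the leading coefficient of $f_{2,1}(x)-x$ can be of odd order ``with either sign'' while each $f_i$ has negative first stability constant. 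That assertion is false --- it contradicts the very statement you are proving --- and the analogy with Examples~\ref{E-f1f2f3}--\ref{E-F1F2F3} is misleading, since those constructions need three maps. What is required is an argument covering every admissible pair; the paper does this by putting $f_1$ in Chen's normal form $-x+x^{2r+1}+c\,x^{4r+1}$ (Theorem~\ref{T-Chen}, Lemma~\ref{L-fn-chen}), writing $f_2(x)=-x+\sum_{k\ge2}a_kx^k$, and splitting according to the first nonvanishing data ($V_3<0$; or the first nonzero even coefficient $a_{2j}$; or all even coefficients vanishing up to order $2m$ with $V_{2m+1}<0$), using Lemma~\ref{L-lema2} to convert vanishing stability constants into vanishing odd Taylor coefficients; in every case the leading term of $f_{2,1}(x)-x$ has either even order (semi-AS) or odd order with negative coefficient (LAS), never a positive odd-order leading term.

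Item (a) is in better shape, and your route there is genuinely different from the paper's (which again argues via normal forms and Taylor expansions rather than monotone comparison). The comparison idea can in fact be closed: if $f_1$ preserves orientation and is LAS while $f_2$ reverses orientation and is LAS, then for small $x>0$ one gets the chain $0<f_{2,1}^{2}(x)<f_2^{2}(f_1(x))<f_1(x)<x$, using only that $f_2$ is strictly decreasing near $0$ and that $f_2^{2}(u)-u<0$ for small $u>0$ (Theorems~\ref{T-orient-rev} and~\ref{T-orient-pres}; the LAS hypothesis rules out $f_2$ being an involution), and symmetrically for $x<0$; local asymptotic stability of $0$ for $f_{2,1}^{2}$ then passes to $f_{2,1}$. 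But as written you leave precisely this step as a hope rather than a proof, and you only treat the order in which the orientation-preserving map is the inner factor; the other order needs either a symmetric run of the same inequalities or the local conjugacy $f_1\circ f_2=f_1\circ(f_2\circ f_1)\circ f_1^{-1}$, which you never invoke. Your reduction of the repeller hypotheses to the LAS ones through local inverses is correct. In summary: (a) is recoverable once these details are supplied, but (b) as planned does not prove the theorem and rests on a claim incompatible with it.
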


To prove it, we  introduce  the differentiable normal form of an
analytic map $f$ with a non-hyperbolic fixed point at $0$, which is
given by the next result in \cite{Ch} (see also \cite{AP,ChDD}). A
similar result for $\mathcal{C}^\infty$ maps can be found in
\cite[Thm 2]{T}:

\begin{teo}[K.~Chen, \cite{Ch}]\label{T-Chen}
Let $f$ be an \emph{analytic} diffeomorphism on $\R$. If $f$ is
orientation preserving (resp. reversing), and it is not an
involution, then given any positive integer $\ell$ there exists a
$\mathcal{C}^\ell$ local diffeomorphism $\varphi$ on $\R$ such that
$g=\varphi^{-1}\circ f\circ\varphi$ is in one of the normal forms:
\begin{enumerate}
  \item[(a)] $g(x)=\lambda x$, with $|\lambda|\neq 1$ and $\lambda>0$
  (resp. $\lambda<0$),
  \item[(b)] $g(x)=x+(\pm x)^{m+1}+cx^{2m+1}$ (resp.
  $g(x)=-x\pm x^{m+1}+cx^{2m+1}$),
\end{enumerate}
where $c\in\R$, and $m$ is a positive (resp. positive even) integer.
\end{teo}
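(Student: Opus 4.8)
The plan is to split the proof according to the multiplier $\lambda=f'(0)$ and to treat the hyperbolic and non-hyperbolic regimes by entirely different mechanisms. When $|\lambda|\neq1$ the fixed point is hyperbolic, and I would invoke the classical linearization theorem of Koenigs: an analytic germ with $f(0)=0$ and $0<|\lambda|<1$ is analytically conjugate to its linear part $x\mapsto\lambda x$ (for $|\lambda|>1$ one applies this to $f^{-1}$). Since $\lambda>0$ exactly when $f$ preserves orientation, this immediately yields normal form $(a)$ with the stated sign of $\lambda$, via an analytic — hence a fortiori $\mathcal{C}^\ell$ — conjugacy $\varphi$. All the remaining work is therefore concentrated in the two non-hyperbolic cases $\lambda=1$ and $\lambda=-1$.

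For the orientation-preserving non-hyperbolic case ($\lambda=1$, $f$ not the identity) I would first reduce to a formal normal form and then realize it in finite smoothness. Writing $f(x)=x+a_{m+1}x^{m+1}+O(x^{m+2})$ with $a_{m+1}\neq0$, a linear scaling $x\mapsto\mu x$ replaces $a_{m+1}$ by $\mu^{m}a_{m+1}$; choosing $|\mu|=|a_{m+1}|^{-1/m}$ normalizes the leading coefficient to $\pm1$, and one checks that the sign can be flipped by taking $\mu<0$ precisely when $m$ is odd, which is exactly the content of the notation $(\pm x)^{m+1}$. Next I would kill the intermediate coefficients order by order: conjugating by a near-identity map $x\mapsto x+bx^{k}$ modifies the coefficient of $x^{k+m}$ by the factor $(2m+1-k-m)b$, a nonzero multiple of $b$ for every order except $2m+1$. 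Solving this homological recursion removes every coefficient except the one at $x^{2m+1}$, whose residual value is the formal invariant $c$; this produces a formal conjugacy $\hat\varphi$ to $\hat g(x)=x+\epsilon x^{m+1}+cx^{2m+1}$.

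The delicate step — and the one I expect to be the main obstacle — is to pass from this formal series to a genuine $\mathcal{C}^\ell$ diffeomorphism, because analytic conjugacy to $\hat g$ is in general \emph{false} (the \'Ecalle--Voronin moduli obstruct it), so finite smoothness must be exploited. I would truncate $\hat\varphi$ at a degree $N=N(\ell)\gg\ell$, obtaining a polynomial $\varphi_N$ with $\varphi_N^{-1}\circ f\circ\varphi_N=\hat g+R$, where the remainder $R(x)=O(x^{N})$ is flat to high order at the origin. The residual $R$ is then removed by the deformation (path) method: one connects $\hat g+R$ to $\hat g$ along $g_t=\hat g+tR$, $t\in[0,1]$, solves the time-dependent homological equation for the infinitesimal generator $\xi_t$ of the sought conjugacy, and integrates the resulting flow. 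Because $R$ is flat of order $N$ while the path method loses only finitely many derivatives, taking $N$ large guarantees that $\xi_t$ is $\mathcal{C}^\ell$ and integrable near $0$, producing a $\mathcal{C}^\ell$ conjugacy $\psi$; the composite $\varphi_N\circ\psi$ then conjugates $f$ to $\hat g$, i.e. to normal form $(b)$.

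Finally, for the orientation-reversing non-hyperbolic case ($\lambda=-1$, $f$ not an involution) I would reduce to the preceding analysis through the square $F:=f^{2}$, which is orientation preserving with $F'(0)=1$ and $F\neq\mathrm{Id}$. Here Theorem~\ref{T-orient-rev} is decisive: the first nonvanishing stability constant of $f$ occurs at an \emph{odd} order, so $F(x)=x+V_{2k+1}x^{2k+1}+O(x^{2k+2})$; writing $m+1=2k+1$ forces $m=2k$ to be even, which is exactly the parity restriction in the statement. I would then normalize $F$ to $\hat g$ as above and transport the conjugacy back to $f$: after conjugating so that $f^{2}=\hat g$, it remains to show that every orientation-reversing $\mathcal{C}^\ell$ square root of $\hat g$ is itself $\mathcal{C}^\ell$-conjugate to $-x+\epsilon x^{m+1}+cx^{2m+1}$. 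This square-root rigidity — equivalently, redoing the homological computation directly for the reversing germ, where conjugation by $x\mapsto x+bx^{k}$ alters $f$ at leading order through the operator $b\mapsto b\,(1+(-1)^{k})$, so that the odd orders are resonant and the leading term $\epsilon x^{m+1}$ then isolates the single surviving invariant at order $2m+1$ — is the remaining technical point, and it is handled by the same formal-then-$\mathcal{C}^\ell$ scheme used in the preserving case.
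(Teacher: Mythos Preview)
The paper does not prove this theorem at all: it is quoted verbatim as a result of K.~Chen~\cite{Ch} (with supporting references to~\cite{AP,ChDD} and, for the $\mathcal{C}^\infty$ analogue, to Takens~\cite{T}), and is used as a black box in the proof of Theorem~\ref{T-main}. There is therefore no ``paper's own proof'' to compare your proposal against.

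That said, your outline is a reasonable roadmap toward Chen's result. The hyperbolic case via Koenigs is standard and correct. For $\lambda=1$ your homological computation is right: conjugation by $x\mapsto x+bx^{k}$ shifts the coefficient at order $k+m$ by $(m+1-k)b$, which singles out $k=m+1$ (i.e.\ order $2m+1$) as the sole formal invariant beyond the leading term. You also correctly flag the genuine obstruction, namely that analytic conjugacy to the polynomial model is obstructed by the \'Ecalle--Voronin moduli, so one must settle for $\mathcal{C}^\ell$; the truncation-plus-deformation scheme you describe is one viable route, though as written it is a plan rather than a proof (the loss-of-derivatives bookkeeping and the solvability of the time-dependent homological equation near a degenerate fixed point need to be carried out). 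For $\lambda=-1$, reducing to $f^{2}$ and invoking Theorem~\ref{T-orient-rev} to force $m$ even is exactly right; the remaining ``square-root rigidity'' step is again only sketched, and the homological operator you write, $b\mapsto b\,(1+(-1)^{k})$, only accounts for the interaction with the linear part $-x$ --- you still need the contribution from the leading nonlinear term $\pm x^{m+1}$ to eliminate the non-resonant odd orders and isolate $2m+1$. None of these are errors, but they are the places where real work remains.
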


Next result classifies the stability of each of these normal forms.
The proof follows from a straightforward application of
Theorems~\ref{T-orient-pres} and \ref{T-orient-rev}.

\begin{lem}\label{L-fn-chen} The following statements hold.
\begin{itemize}
  \item[(a)]  The map $f(x)=x+
x^{m+1}+cx^{2m+1},$ with $0<m\in\N$ and $c\in\R,$ has a semi-AS from
the left fixed point at the origin if $m$ is odd, and a repeller
fixed  point if $m$ is even.
  \item[(b)]The map $f(x)=x-
x^{m+1}+cx^{2m+1},$ with $0<m\in\N$ and $c\in\R,$ has a semi-AS from
the right fixed point at the origin if $m$ is odd, and a LAS fixed
point if $m$ is even.
  \item[(c)]The map $f(x)=-x+
x^{2r+1}+cx^{4r+1},$ with $0<r\in\N$ and $c\in\R,$ has a LAS fixed
point at the origin.
  \item[(d)] The map $f(x)=-x-
x^{2r+1}+cx^{4r+1},$ with $0<r\in\N$ and $c\in\R,$ has a repeller
fixed point at the origin.
\end{itemize}
\end{lem}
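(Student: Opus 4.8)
The plan is to verify each of the four statements by a direct reduction to Theorems~\ref{T-orient-pres} and \ref{T-orient-rev}, treating the orientation-preserving cases $(a)$ and $(b)$ first, since there the map is already in the form $f(x)=x+a_mx^m+O(m+1)$ required by Theorem~\ref{T-orient-pres}.

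\medskip
\noindent\textbf{Items $(a)$ and $(b)$.}
For $f(x)=x+x^{m+1}+cx^{2m+1}$ we read off $a_{m+1}=1>0$, which is the leading nontrivial coefficient (here $m\ge 1$, so $m+1\ge 2$). Now apply Theorem~\ref{T-orient-pres} with its ``$m$'' equal to our $m+1$. If our $m$ is odd then $m+1$ is even, so part $(a)$ of Theorem~\ref{T-orient-pres} gives semi-AS, and since $a_{m+1}=1>0$ it is semi-AS from the left. If our $m$ is even then $m+1$ is odd, so part $(b)$ applies, and $a_{m+1}>0$ yields a repeller. This proves item $(a)$. Item $(b)$ is identical after replacing the leading coefficient by $a_{m+1}=-1<0$: for $m$ odd we get semi-AS, now from the right because $a_{m+1}<0$; for $m$ even we get LAS because $a_{m+1}<0$. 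No real obstacle here; one only has to be careful not to confuse the index $m$ of the lemma with the index $m$ in the statement of Theorem~\ref{T-orient-pres}.

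\medskip
\noindent\textbf{Items $(c)$ and $(d)$.}
Here $f'(0)=-1$, so I use Theorem~\ref{T-orient-rev}, which requires computing the first nonzero stability constant $V_{2s+1}$ of $f$, i.e. the first nonzero coefficient of $f^2(x)-x$. Writing $f(x)=-x+a_{2r+1}x^{2r+1}+a_{4r+1}x^{4r+1}$ with $a_{2r+1}=\pm1$ and $a_{4r+1}=c$, all intermediate coefficients vanish, so a short computation of $f\circ f$ shows that the lowest-order term of $f^2(x)-x$ appears in degree $2r+1$ and equals $-2a_{2r+1}x^{2r+1}$ (this is the phenomenon already noted in the proof of Proposition~\ref{P-stabil-const}, that $V_{2s+1}$ contains the monomial $-2a_{2s+1}$, with here no lower-order contributions because $a_2=\cdots=a_{2r}=0$). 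Thus $V_{2r+1}=-2a_{2r+1}$. In case $(c)$, $a_{2r+1}=1$, so $V_{2r+1}=-2<0$ and Theorem~\ref{T-orient-rev} gives LAS; in case $(d)$, $a_{2r+1}=-1$, so $V_{2r+1}=2>0$ and Theorem~\ref{T-orient-rev} gives repeller. One should double-check that $f$ is not an involution; since $f^2(x)-x=-2a_{2r+1}x^{2r+1}+O(2r+2)$ is not identically zero, Theorem~\ref{T-orient-rev} applies, and consistency with that theorem's claim that the first nonzero constant has odd order is automatic since $2r+1$ is odd.

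\medskip
\noindent\textbf{Main obstacle.}
The only mildly delicate point is the explicit computation of the lowest-degree term of $f^2(x)-x$ in cases $(c)$ and $(d)$: one has to confirm that the $x^{4r+1}$-term of $f$ (the $c$-term) and the composition cross-terms do not produce anything of degree lower than $2r+1$, and that the degree-$(2r+1)$ coefficient is exactly $-2a_{2r+1}$ regardless of $c$. This is straightforward: substituting $f$ into itself, the output is $-(-x + a_{2r+1}x^{2r+1}+cx^{4r+1}) + a_{2r+1}(-x+\cdots)^{2r+1} + c(-x+\cdots)^{4r+1} = x - a_{2r+1}x^{2r+1} - a_{2r+1}x^{2r+1} + (\text{higher order}) = x - 2a_{2r+1}x^{2r+1}+O(2r+2)$, since $(-x)^{2r+1}=-x^{2r+1}$ and $(-x)^{4r+1}=-x^{4r+1}$ cancels the explicit $-cx^{4r+1}$ only in degree $4r+1>2r+1$, hence irrelevant. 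With the sign of $V_{2r+1}$ in hand, the conclusion is immediate from Theorem~\ref{T-orient-rev}.
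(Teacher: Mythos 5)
Your proof is correct and follows exactly the route the paper indicates, namely a direct application of Theorem~\ref{T-orient-pres} to items $(a)$--$(b)$ and of Theorem~\ref{T-orient-rev} (via the leading term of $f^2(x)-x$, giving $V_{2r+1}=-2a_{2r+1}$) to items $(c)$--$(d)$, including the check that $f$ is not an involution. The only blemish is the parenthetical remark that $c(f(x))^{4r+1}$ ``cancels'' the explicit $-cx^{4r+1}$ (in fact the two terms add to $-2cx^{4r+1}$), but as you note these degree-$(4r+1)$ contributions are irrelevant anyway, so the argument is unaffected.
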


To prove Theorem \ref{T-main}, we also need the following result.

\begin{lem}\label{L-lema2}
Let $f$ be an  analytic map in $\mathcal{U}$ which is not an
involution, given by $f(x)=-x+\sum_{j\geq 2} a_j x^j$. Assume that
$a_{2j}=0$ and $V_{2j+1}=0$ for all $j=1,2,\ldots,m.$ Then
$a_{2j+1}=0$ for all $j=1,2,\ldots,m.$
\end{lem}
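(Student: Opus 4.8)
The plan is to argue by induction on $j\in\{1,\dots,m\}$, proving that $a_{2j+1}=0$ and, simultaneously, that along the way all the relevant stability constants are well defined (i.e.\ the lower coefficients $W_3,\dots,W_{2j}$ of $f^2(x)-x$ vanish, so that the hypothesis $V_{2j+1}=0$ is meaningful and equals $W_{2j+1}=0$). The only structural input I need is the following elementary fact about the expansion $f^2(x)=x+\sum_{\ell\ge 3}W_\ell\, x^\ell$: for every $\ell\ge3$, the coefficient $W_\ell$ is a polynomial in $a_2,\dots,a_\ell$ with \emph{no constant term}, and moreover, if $a_2=\cdots=a_{\ell-1}=0$, then $W_\ell=-2a_\ell$ when $\ell$ is odd and $W_\ell=0$ when $\ell$ is even. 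This is already implicit in the proof of Proposition~\ref{P-stabil-const}, where it is observed that every $V_{2m+1}$ contains the monomial $-2a_{2m+1}$; for completeness I would give the short justification: writing $f(x)=-x+\sum_{j\ge2}a_jx^j$ and composing, the coefficient of $x^\ell$ in $f\circ f$ gets the contribution $-a_\ell$ from the outer term $-f(x)$ and the contribution $(-1)^\ell a_\ell$ from the leading monomial of $a_\ell f(x)^\ell$, whose sum is $-2a_\ell$ if $\ell$ is odd and $0$ if $\ell$ is even; every remaining contribution comes from a term $a_j f(x)^j$ with $2\le j\le \ell-1$, whose $x^\ell$-coefficient involves only $a_2,\dots,a_{\ell-1}$ and carries the explicit factor $a_j$, hence contributes nothing when $a_2=\cdots=a_{\ell-1}=0$.

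Granting this, the induction runs uniformly (the case $j=1$ being the same argument with an empty hypothesis). Fix $j\in\{1,\dots,m\}$ and assume $a_3=a_5=\cdots=a_{2j-1}=0$. Since by hypothesis $a_2=a_4=\cdots=a_{2m}=0$ and $2j\le 2m$, we get $a_\ell=0$ for every $\ell$ with $2\le\ell\le 2j$. By the structural fact this forces $W_\ell=0$ for $3\le\ell\le 2j$, so $V_{2j+1}$ is indeed defined and equals $W_{2j+1}$; applying the structural fact once more, now with $a_2=\cdots=a_{2j}=0$, gives $W_{2j+1}=-2a_{2j+1}$. The hypothesis $V_{2j+1}=0$ therefore yields $a_{2j+1}=0$, closing the induction. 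Letting $j$ run from $1$ to $m$ gives $a_3=a_5=\cdots=a_{2m+1}=0$, which is the claim. (The assumption that $f$ is not an involution is only used so that the constants $V_\ell$ are defined in the first place; it plays no further role.)

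The argument is essentially bookkeeping, so the only point requiring care — the ``main obstacle'' — is making the structural statement about $W_\ell$ precise without getting lost in Fa\`a di Bruno combinatorics. I would keep it at the level of the degree/parity count above, relying on Proposition~\ref{P-stabil-const} for the explicit pattern in low order, rather than writing out closed formulas for general $\ell$.
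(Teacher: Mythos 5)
Your argument is correct and is essentially the paper's own proof: an induction that reduces, at each stage, to $f(x)=-x+a_{2j+1}x^{2j+1}+O(2j+2)$ and then uses $f^2(x)=x-2a_{2j+1}x^{2j+1}+O(2j+2)$, i.e.\ $V_{2j+1}=-2a_{2j+1}$, to conclude $a_{2j+1}=0$. The only differences are cosmetic: you run the induction on $j$ up to a fixed $m$ instead of on $m$ itself, and you spell out the parity/composition bookkeeping (and the well-definedness of $V_{2j+1}$) that the paper handles implicitly via the $O$-notation and Proposition~\ref{P-stabil-const}.
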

\begin{proof}
We prove the lemma by induction on $m.$ If $m=1$ then $a_2=V_3=0.$
By Proposition~\ref{P-stabil-const}, since
$V_3=-2(a_2^2+a_3)=-2\,a_3=0$ we get $a_3=0.$

Now assume that the result is true for all $j\le m-1$ and assume
that $a_{2j}=0$ and $V_{2j+1}=0$ for all $j=1,2,\ldots,m.$ In
particular $a_{2j}=0$ and $V_{2j+1}=0$ for all $j=1,2,\ldots,m-1.$
Applying the induction hypothesis we get $a_{2j+1}=0$ for all
$j=1,2,\ldots,m-1.$ Then, $f(x)=-x+a_{2m+1}\,x^{2m+1}+O(2m+2)$ which
implies that $V_{2m+1}=-2\,a_{2m+1},$  because
$f^{2}(x)=x-2\,a_{2m+1}\,x^{2m+1}+O(2m+2).$  Since $V_{2m+1}$ is
zero, also $a_{2m+1}$ must be zero.
\end{proof}

\begin{proof}[Proof of Theorem \ref{T-main}]

We will consider only the situation where $f_1$ and $f_2$ have a LAS
fixed point; the other case follows similarly.

$(a)$ Let $f_1$ and $f_2$ be the two maps, and assume that the
second one preserves orientation. Without loss of generality, we can
take the first one in one of  its normal forms given in
Lemma~\ref{L-fn-chen}, $f_1(x)=\pm x \mp x^{2r+1}+c\,x^{4r+1},$ and
the second one, by Theorem~\ref{T-orient-pres}, as
$f_2(x)=x+a\,x^{2n+1}+O(2n+2)$ with $a<0.$ Then,
\begin{align*}
f_{2,1}(x)&=\pm x \mp x^{2r+1}+c\,x^{4r+1}+a\,x^{2n+1}\,(\pm
1\mp x^{2r}+c\,x^{4r})^{2n+1}
+O(\min(2r+2,2n+2))\\
{ }&=\pm x \mp x^{2r+1} \pm a\,x^{2n+1}+O(\min(2r+2,2n+2)).
\end{align*}

When $f_1$ also preserves orientation,  $f_{2,1}(x)= x - x^{2r+1} +
a\,x^{2n+1}+O(\min(2r+2,2n+2)).$ Then, if $r\ne n,$ from
Theorem~\ref{T-orient-pres} the origin is LAS for the composition
map. If $r=n$ then $f_{2,1}(x)=x+(a-1)\,x^{2n+1}+O(2n+2)$ and since
$a-1<0,$ applying again Theorem~\ref{T-orient-pres}  the result
follows.

When $f_1$ reverses orientation, $f_{2,1}(x)= -x + x^{2r+1} -
a\,x^{2n+1}+O(\min(2r+2,2n+2)).$ In this case
\[
f_{2,1}^2(x)= x - 2x^{2r+1} + 2 a\,x^{2n+1}+O(\min(2r+2,2n+2)).
\]
Applying the same tools that in the previous situation, but to
$f_{2,1}^2,$ the result also follows.

\bigskip

$(b)$ In this case, without loss of generality, we consider $f_1$
written in normal form $f_1(x)=-x+x^{2r+1}+c\,x^{4r+1}$. We also
consider $f_2(x)=-x+\sum_{k\geq 2} a_k x^k$ such that either $V_3<0$
or $V_{2j+1}=0$ for $j=1,2,\ldots,m-1$ for $m\geq 2$, and
$V_{2m+1}<0.$ Now we split the proof in  three subcases:

\bigskip

$(i)$ Assume first that $V_3=-2(a_2^2+a_3)<0.$ Then
$f_2(x)=-x+a_2x^2+a_3x^3 +O(4),$ with $a_2^2+a_3>0$ and
\begin{align*}
 f_{2,1}(x)&=x-x^{2r+1}-c\,x^{4r+1}+a_{2}\,x^{2}\left(1-x^{2r}-c\,x^{4r}\right)^{2}
 -a_{3}\,x^{3}\left(1-x^{2r}-c\,x^{4r}\right)^{3}+O(4)\\
\phantom{asd}&=x+a_{2}\,x^{2}-a_{3}\,x^{3}-x^{2r+1}+O(4).
\end{align*}
By Theorem~\ref{T-orient-pres}, when $a_2\ne0$ the origin is
semi-AS. If $a_2=0$, then $a_3>0$ and the origin is LAS for all
$r\ge1.$

\bigskip

$(ii)$ In this second case we suppose that $V_3=0$ and that there
exists $1\le j\le m-1$ such that $a_{2i}=0$ for $i=1,2,\ldots,j-1$
and $a_{2j}\ne 0.$ Since $V_{2i+1}=0$ for $i=1,2,\ldots,j-1$
applying Lemma \ref{L-lema2} we have that $a_{2i+1}=0$ for
$i=1,2,\ldots, j-1.$ Hence $f_2(x)=-x+a_{2j}\,x^{2j}+O(2j+1),$ and
\begin{align*}
 f_{2,1}(x)&=x-x^{2r+1}-c\,x^{4r+1}+a_{2j}\,x^{2j}\left(1-x^{2r}-c\,x^{4r}\right)^{2j}+O(\min(2r+2,2j+1))\\
\phantom{asd}&=x-x^{2r+1}+a_{2j}\,x^{2j}+O(\min(2r+2,2j+1)).
\end{align*}
Hence, by Theorem~\ref{T-orient-pres}, if $j\le r$ then the origin
is a semi-AS fixed point and if $j>r$ then it is a LAS fixed point.

\bigskip

$(iii)$  Finally  assume that $V_3=0$ and   that $a_{2i}=0$ for
$i=1,2,\ldots,m-1.$ Since $V_{2j+1}=0,$ for $j=1,2,\ldots,m-1$, and
$V_{2m+1}<0,$ from Lemma \ref{L-lema2} we get that $a_{2i+1}=0$ for
$i=1,2,\ldots,m-1.$ Moreover $m\ge2.$ Consequently
$f_2(x)=-x+a_{2m}\,x^{2m}+a_{2m+1}\,x^{2m+1}+O(2m+2)$. Then,
$f_2^{2}(x)=x-2\,a_{2m+1}\,x^{2m+1}+O(2m+2),$ and therefore
$0>V_{2m+1}=-2\,a_{2m+1}.$ Moreover,
\begin{align*}
 f_{2,1}(x)=&x-x^{2r+1}-c\,x^{4r+1}+a_{2m}\,x^{2m}\left(1-x^{2r}-c\,x^{4r}\right)^{2m}-\\&
 a_{2m+1}\,x^{2m+1}\left(1-x^{2r}-c\,x^{4r}\right)^{2m+1}+O\big(\min(2r+2,2m+2)\big)\\
\phantom{asd}=&x-x^{2r+1}+a_{2m}\,x^{2m}-a_{2m+1}\,x^{2m+1}+O\big(\min(2r+2,2m+2)\big).
\end{align*}

Assume first that $a_{2m}\ne0.$ Then, again by
Theorem~\ref{T-orient-pres}, when $2r+1>2m$ then the origin is
semi-AS and when $2r+1<2m$  the origin is LAS.

Finally, suppose that  $a_{2m}=0.$ Applying once more
Theorem~\ref{T-orient-pres} we obtain that in all cases the origin
is LAS because $a_{2m+1}>0.$~\end{proof}

\begin{proof}[Proof of Theorem \ref{T-laslasreprep}]

Statement $(a)$ is a direct consequence of Theorem \ref{T-main}.

 $(b)$ Observe that  the maps  of Examples~\ref{E-f1f2f3} and~\ref{E-F1F2F3}
 prove the statement for $k=3$. Indeed, for instance, remember that the
  maps of Example~\ref{E-f1f2f3} have been chosen
in such a way that the stability constats satisfy $V_3(f_i)=0$ and
$V_5(f_i)<0$ for $i\in\{1,2,3\}$, so that they have a LAS fixed
point at the origin. The stability of the origin for these maps can
also be straightforwardly obtained from the first terms of the
Taylor series at the origin of $f_i^{2},i=1,2.$ A computation gives
$ f_{3,2,1}(x)=-x+90x^4-48x^5+O(6), $ so from
Proposition~\ref{P-stabil-const} we have $V_3(f_{3,2,1})=0$ and
$V_5(f_{3,2,1})=96>0$, and therefore the origin is a repeller fixed
point of $f_{3,2,1}.$ Since $f_{3,2,1}^{2}(x)=x+96x^5+O(7),$ the
result also follows from Theorem \ref{T-orient-pres}.

Next we show how to construct the maps of Example~\ref{E-f1f2f3}. We
start with some maps
$$
f_i(x)=-x+a_{2,i}x^2+a_{3,i}x^3+a_{4,i}x^4+a_{5,i}x^5,\,
i\in\{1,2,3\}.
$$
To get that $V_3(f_i)=0$ and $V_5(f_i)<0$, first we take
$a_{3,i}=-a_{2,i}^2$, and then impose that $V_5(f_i)=2\left(2
a_{2,i}^{4}-3 a_{2,i}a_{4,i}-a_{5,i}\right)=-2A_i^2$, obtaining $
a_{5,i}=2a_{2,i}^4-3a_{2,i}a_{4,i}+A_i^2.$

At this point we notice that
$$
f_{3,2,1}(x)=-x+(a_{2,1}+a_{2,3}-a_{2,2}^2)x^2-(a_{2,1}+a_{2,3}-a_{2,2}^2)^2x^3+O(4),
$$
hence $V_3(f_{3,2,1})=0$. In order to reduce parameters and simplify
the expressions we take $a_{2,1}=a_{2,2}^2-a_{2,3}$, obtaining
$f_{3,2,1}(x)=-x+(3a_{2,2}^2a_{2,3}-3a_{2,2}a_{2,3}^2+a_{4,1}-a_{4,2}+a_{4,3})x^4+O(5)$.
Again, to reduce parameters we take $a_{2,3}=2$ and
$a_{4,1}=a_{4,2}=a_{4,3}=0$. With this choice we get that
$$
V_5(f_{3,2,1})=-2\left(A_1^2+A_2^2+A_3^2-4a_{2,2}^3+24a_{2,2}^2-32a_{2,2}\right).
$$
Taking $A_1=\sqrt{2}$, $A_2=3$ and $A_3=1$, we obtain that
$$
V_5(f_{3,2,1})=-2\left(-4a_{2,2}^3+24a_{2,2}^2-32a_{2,2}+12\right)=8(a_{2,2}-1)(a_{2,2}^2-5a_{2,2}+3),
$$ and therefore $V_5(f_{3,2,1})>0$ if and only if
$a_{2,2}\in\left((5-\sqrt{13})/2,1\right)$ or
$a_{2,2}>(5+\sqrt{13})/2\simeq 4.303$. Taking $a_{2,2}=5$, we obtain
the maps of Example~\ref{E-f1f2f3}.

Now we consider the case $k>3$. We take the maps $f_1,f_2$  and
$f_3$ given in Example~\ref{E-f1f2f3}, and for all
$j\in\{4,\ldots,k\}$ we consider the maps $f_j(x)=x- x^{7}, $  so
that all them have a LAS fixed point at the origin (by Theorem
\ref{T-orient-pres}). Then $f_{k,\ldots,4}(x)=x-(k-3)x^7+O(13)$.

Observe that if we take any map of the form
$g(x)=-x+\sum_{j=2}^{5}\alpha_j x^j +O(6)$ we obtain
$f_{k,\ldots,4}\circ g(x)=-x+\sum_{j=2}^{5}\alpha_j x^j +O(6)$. Thus
$f_{k,\ldots,1}(x)=f_{k,\ldots,4}\circ
f_{3,2,1}(x)=-x+90x^4-48x^5+O(6).$ Therefore $V_3(f_{k,\ldots,1})=0$
and $V_5(f_{k,\ldots,1})=96>0$, and  the origin is  repeller for
$f_{k,\ldots,1}$.

 Similarly, consider the maps $g_1$, $g_2$ and $g_3$
given in Example \ref{E-F1F2F3} and $g_j(x)=x+x^7$ for all
$j\in\{4,\ldots,k\}$. By construction, each map $g_j$ has a repeller
fixed  point at the origin. Now
$g_{k,\ldots,1}(x)=-x+90x^4+48x^5+O(6), $ and a computation shows
that $V_3(g_{k,\ldots,1})=0$ and $V_5(g_{k,\ldots,1})=-96<0.$ Hence
the origin is  LAS for $g_{k,\ldots,1}$.
\end{proof}

\section{Proof of Theorems
\ref{T-atratr-dim2} and \ref{T-Main-A}}\label{S-dim-2}

We start recalling the tools that we will use to know the stability
of the elliptic fixed point.

\subsection{Birkhoff normal form and stability}\label{ss:kam}

We remark that in this article we are not interested only on the
\emph{stability} of the elliptic fixed points, that is one of the
issues  that people usually refers in the context of studying maps
with elliptic points, via Moser twist theorem and KAM theory. Here
we want to know whether these fixed points are LAS or repeller. This
information is given by the so called \emph{Birkhoff constants} that
we recall next.

Given an elliptic fixed point with eigenvalues $\lambda,\bar
\lambda=1/\lambda,$ that are not roots of unity of order $\ell$ for
$0<\ell\leq 2m+1,$ we will say that $p$ is a \emph{non
$(2m+1)$-resonant} elliptic point. Near a non $(2m+1)$-resonant
elliptic fixed point, a $\mathcal{C}^{2m+2}$-map $f$ is locally
conjugated to its \emph{{Birkhoff normal form}} plus some remainder
terms, see \cite{AP}. This normal form is
\begin{equation*}
f_{B}(z,\bar z)=\lambda z\Big(1+\sum\limits_{j=1}^{m} B_j
(z\bar{z})^j\Big)+O(2m+2),
\end{equation*}
where $z=x+y i$ and $B_j$ are complex numbers. The first
non-vanishing number $B_j$ is called the $j$th \emph{Birkhoff
constant}. Then, $V_j=\mathrm{Re}(B_j)$ will be called the
$j$th \emph{Birkhoff stability constant}. Both constants provide very
useful dynamical information in a neighborhood of the elliptic
point. In this sense, a well-known result is the following:

\begin{lem}[\cite{CGM15}]\label{P-dynamicaFN-gen}
For $m\in\N$, consider a $\mathcal{C}^{2m+2}$-map $f$  with an
elliptic fixed point $p\in \mathcal{U}$,  non $(2m+1)$-resonant. Let
$B_m$ be its first non-vanishing  Birkhoff constant. If
$V_m=\mathrm{Re}(B_m)< 0$ (resp. $V_m=\mathrm{Re}(B_m)>0$), then the
point $p$ is  LAS (resp. repeller).
\end{lem}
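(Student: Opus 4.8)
The plan is to reduce the planar problem to one-dimensional radial dynamics and then invoke Theorem~\ref{T-orient-pres}. First I would put $f$ into Birkhoff normal form: since $p$ is a non $(2m+1)$-resonant elliptic fixed point of the $\mathcal{C}^{2m+2}$ map $f$, the normal form theorem (\cite{AP}) provides a local coordinate change, fixing $p$, conjugating $f$ to
$$
f_B(z,\bar z)=\lambda z\big(1+B_m (z\bar z)^m\big)+R(z,\bar z),\qquad R(z,\bar z)=O\big(|z|^{2m+2}\big),
$$
where I have used $B_1=\cdots=B_{m-1}=0$ and $|\lambda|=1$. Because being LAS or a repeller is a topological property preserved by local homeomorphisms fixing the point, it suffices to decide the character of the origin for $f_B$.

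Next I would extract the radial dynamics. Writing $r=|z|$ and using $|\lambda|=1$ together with $R=O(|z|^{2m+2})$, a direct expansion of $f_B(z)\overline{f_B(z)}$ gives
$$
|f_B(z)|^2=|z|^2\big(1+2V_m|z|^{2m}+O(|z|^{2m+1})\big),
$$
with $V_m=\mathrm{Re}(B_m)$, since $B_m+\bar B_m=2V_m$, the term $|B_m|^2|z|^{4m}$ has order $\ge 2m+1$, and the cross terms involving $R$ are $O(|z|^{2m+3})$. Taking square roots, the orbit of any point near the origin satisfies $r_{n+1}=r_n+V_m\,r_n^{2m+1}+E_n$ with $|E_n|\le C\,r_n^{2m+2}$ uniformly in $\arg z_n$, on a small interval $[0,\delta)$. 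Thus $g(r)-r=V_m r^{2m+1}+O(r^{2m+2})$ has constant sign $\operatorname{sign}(V_m)$ on $(0,\delta)$, which is precisely the situation governed by Theorem~\ref{T-orient-pres}(b) (the odd exponent $2m+1\ge 3$, coefficient $V_m$): the radial orbit is eventually strictly monotone, decreasing to $0$ if $V_m<0$ and increasing (leaving any small disc) if $V_m>0$.

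Finally I would translate this back. If $V_m<0$, then for $|z_0|$ small enough $r_n=|z_n|\to0$ while staying below $\delta$, so $z_n\to0$ and the orbit remains in a fixed disc, giving that the origin is LAS for $f_B$ and hence for $f$. If $V_m>0$, then for every $z_0\neq0$ arbitrarily close to the origin there is $n$ with $|z_n|\ge\varepsilon$, so the origin is a repeller for $f_B$, hence for $f$; alternatively, the repeller case can be deduced from the LAS case applied to $f^{-1}$, which is again $\mathcal{C}^{2m+2}$ near $p$ and whose first non-vanishing Birkhoff constant is $-B_m$, together with the remark that $p$ is a repeller for $f$ exactly when it is an attractor for $f^{-1}$. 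I expect the only delicate point to be controlling the remainder: one must ensure that the limited $\mathcal{C}^{2m+2}$ smoothness of $f$ and of the conjugacy, and the resulting error term of order $2m+2$, do not interfere with the sign of the dominant term $V_m r^{2m+1}$ in $g(r)-r$ — which is why the normalization is pushed exactly to order $2m+1$ and why the one-dimensional trapping argument must be run with error estimates uniform in the angular variable.
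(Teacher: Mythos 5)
Your argument is correct: the paper itself quotes this lemma from \cite{CGM15} without proof, and the argument there proceeds essentially as you do, via the Birkhoff normal form and the estimate $|f_B(z)|=|z|+V_m|z|^{2m+1}+O\big(|z|^{2m+2}\big)$, concluded by a monotone-radius (Lyapunov-type) argument. The only caveat is that Theorem~\ref{T-orient-pres} cannot be invoked literally, since the radial recursion is not an autonomous one-dimensional map; but, as you yourself note, the uniform-in-angle bound on the error term makes the elementary monotonicity/trapping argument go through, including the small extra step that a positive limit of the decreasing (resp.\ increasing) radii is impossible because the increments would then be bounded away from zero.
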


The computation of the Birkhoff normal forms and the corresponding
Birkhoff constants is a subject on which there is abundant
literature, the reader is referred  for instance to~\cite{AP}. Next
we give the expression of the first Birkhoff constant of a map with
a non $3$-resonant fixed point at the origin. Set
\begin{equation*}
g(z,\bar z)=\lambda z\,+\sum_{m+j=2}^3
a_{m,j}z^m\bar{z}^j+O(4),\end{equation*}
where $z\in\mathbb{C}$,
$\lambda\in\C,$ $|\lambda|=1$. Then the first Birkhoff constant is
\begin{equation}\label{E-B1-Gen}
B_1=B_1(g)=\frac{ P(g) }{{\lambda}^{2} \left( \lambda-1 \right)
\left( {\lambda}^{2}+\lambda+ 1 \right) },
\end{equation}
where
\begin{align*}
         P(g)=& \left( |a_{11}|^2+a_{21} \right)
{\lambda}^{4}-a_{11}
 \left( 2a_{20}-\overline{a}_{11} \right) {\lambda}^{3}+ \left( 2
|a_{02}|^2 -a_{11}a_{20}+|a_{11}|^2 \right){\lambda}^{2}\\
&- \left( a_{11}a_{20}+a_{21} \right) \lambda+ a_{11}a_{20},
        \end{align*}
see for instance~\cite[Sec. 4]{CGM13}.

Lemma \ref{P-dynamicaFN-gen} allows us to utilize the Birkhoff
stability constants in an analogous way as we used the stability
constants for one-dimensional map. Therefore we follow a similar
idea than the used to construct the maps in the proof of Theorem
\ref{T-laslasreprep}, in order to prove Theorem \ref{T-atratr-dim2}:
we will construct two maps $f_1$ and $f_2$ such that $V_1(f_i)<0$
(resp. positive) and $V_1(f_{2,1})>0$ (resp. negative).

\subsection{Proof of Theorem \ref{T-atratr-dim2}}\label{proofTB}

Consider the maps $f_1$ and $f_2$  of Example \ref{Ex-dim2-1}. To
compute their Birkhoff constants, first we write them in complex
notation obtaining that
\begin{equation}\label{E-G-LAS-LAS-REP-2-DIM}
g_1(z,\bar{z})=iz+(1-3i)z^2+z\bar{z}\,\mbox{ and }\,
g_2(z,\bar{z})=\left(\frac{1}{2}+\frac{\sqrt{3}}{2}i\right)z-z^2\bar{z},
\end{equation} are their respective equivalent complex expressions.
Conversely, the real expressions of each $g_j(x,y)$ are obtained
taking
\begin{equation}\label{E-complex-a-reals}
f_j(x,y)=\left(\mathrm{Re}\left(g_j(x+y i,x-y i)\right),
\mathrm{Im}\left(g_j(x+y i,x-y i)\right)\right), \quad j=1,2.
\end{equation}
Next, we apply the formula (\ref{E-B1-Gen}) to the maps $
g_j(z,\bar{z})$, $j=1,2.$  We get that
 their first Birkhoff constants are
$$
B_1(g_1)=-\frac{1}{2}-\frac{11}{2}\, i \quad\mbox{ and
}\quad B_1(g_2)=-\frac{1}{2}+\frac{\sqrt{3}}{2}\, i.
$$
So their Birkhoff
stability constants are $V_1(g_1)=V_1(g_2)=-1/2<0.$ By Lemma
\ref{P-dynamicaFN-gen}, the origin is a LAS fixed point for both
maps $g_1$ and $g_2$.  Since for each $j=1,2$, 
$f_{j}(x,y)$ and $g_{j}(z,\bar z)$ are different expressions of
the same map, the origin is  LAS  for both maps $f_1$ and $f_2$.

The composition map
$$
g_{2,1}(z,\bar{z})=g_2\circ g_1(z,\bar{z})=i\,\left(\frac{1}{2}+\frac{\sqrt{3}}{2}
i\right)z+
\frac{1}{2}(1-3i)(1+\sqrt{3}i)z^2+\frac{1}{2}(1+\sqrt{3}i)z\bar{z}-iz^2\bar{z}+O(4),
$$
has an associated Birkhoff constant
\begin{equation*}\label{E-valor-B1F21}
B_1(g_{2,1})=\frac{3\sqrt{3}-5}{2}+i\,\frac{3\sqrt{3}-13}{2}\simeq
0.098-3.902\,i,
\end{equation*}
and therefore  $V_1(g_{2,1})>0$. So again by Lemma \ref{P-dynamicaFN-gen}
the origin is a repeller fixed point for $g_{2,1}(z,\bar{z})$, hence also 
for the composition map
$f_{2,1},$ as we wanted to prove.


Next we will explain how we have found the Example \ref{Ex-dim2-1},
used in the above proof. We start with
$$
g_1(z,\bar z)=\alpha z+\sum\limits_{m+j=2}^{3}a_{m,j}z^m
\bar{z}^j\quad \mbox{ and }\quad g_2(z,\bar z)=\beta
z+c_{2,1}z^2\bar z.
$$
The last map has been chosen so that it only contains the cubic
resonant terms that appear in the formula (\ref{E-B1-Gen}). We take
$\alpha=i$ and $\beta=(1+\sqrt{3} \,i)/2$, so that the origin is non
a $3$-resonant elliptic fixed point for both maps. We compute the
Birkhoff constant, using the formula (\ref{E-B1-Gen}), obtaining
\begin{align*}
B_1(g_1)&=\frac{1-i}{2} \left( -2|a_{0,2}|^2+2a_{1,1}a_{2,0}+a_{2,1}
+ \left( a_{1,1} a_{2,0}-|a_{1,1}|^2-a_{2,1} \right) i\right),\\
B_1(g_2)&=\frac{1-i\sqrt{3}}{2}c_{2,1} \end{align*} and
\begin{align*}
    B_1(g_{2,1})=& -\frac{1}{2} |a_{1,1}|^2  - |a_{0,2}|^2+\frac{3}{2} a_{1,1} a_{2,0}+\frac{1}{2}
c_{2,1}\\ &+ \Big(\frac{\sqrt {3 }}{2} a_{1,1} a_{2,0} +\frac{\sqrt
{3}}{2} |a_{1,1}|^2 -\frac{\sqrt {3}}{2} c_{2,1} -
|a_{0,2}|^2-a_{1,1} a_{2,0} -|a_{1,1}|^2 -a_{2,1} \Big) i.
  \end{align*}

In order to reduce the parameters we set $ a_{2,0}=t+s i$,
$a_{11}=1$, $a_{0,2}=0$, $a_{2,1}=0$, $c_{2,1}=u$  where
$s,t,u\in\R$. We get:
\begin{equation}\label{E-B1F-i-G}
B_1(g_1)=\frac{1}{2}\left( 3t+s-1+(-t+3s-1) i\right),\quad
B_1(g_2)=\frac{1}{2}\left(1-\sqrt{3}\,i\right)u,
\end{equation}
 and
\begin{equation}\label{E-B1GF}
B_1(g_{2,1})= \left(1-\frac{\sqrt
{3}}{2}\right)\,s+\frac{3}{2}\,t+\frac{1}{2}\,u-\frac{1}{2}+\left(
\frac{3}{2}\,s+\left(\frac{\sqrt {3}}{2}-1\right)\,t-\frac{\sqrt
{3}}{2}\,u-1+\frac{\sqrt {3}}{2}\,
 \right) i
\end{equation}

To simplify more the above expressions, we consider $s=-3t$ and
$u=-1$, obtaining that $V_1(g_1)=V_1(g_2)=-1/2$, and
$$
V_1(g_{2,1})=-1+\frac{3}{2}(\sqrt{3}-1)\,t.
$$
This last constant is positive for all $t>2/(3(\sqrt{3}-1))\simeq
0.911$, so taking $t=1$ we get the maps
(\ref{E-G-LAS-LAS-REP-2-DIM}). Applying the formula
(\ref{E-complex-a-reals}) we obtain  the expression of the maps of
Example~\ref{Ex-dim2-1}.

We have already commented before Example \ref{Ex-dim2-2} that from
the above example, simply taking the inverse maps we could construct
two planar maps having a common repeller fixed point such that the
corresponding composition map has a LAS fixed point. Nevertheless,
next we construct a simple explicit example, namely, Example
\ref{Ex-dim2-2}. In this case, to reduce parameters in the
expressions (\ref{E-B1F-i-G}) and~(\ref{E-B1GF}), we take $s=2-3t$
and $u=1$, obtaining that $V_1(g_1)=V_1(g_2)=1/2$, and
$$
V_1(g_{2,1})=2-\sqrt{3}+\frac{3}{2}(\sqrt{3}-1)\,t.
$$
This constant is negative for all $t<\frac{1}{3}\, \left(\sqrt {3}
-2 \right)  \left( 1+\sqrt {3} \right) \simeq -0.244$. Setting
$t=-2/3$, and applying the formula (\ref{E-complex-a-reals}) we get
the maps of this last example.

\subsection{Proof of Theorem
\ref{T-Main-A}} \label{S-Main-results-n-dim}

$(a)$ Consider the $k\ge3$ functions $\{f_1,f_2,\ldots,f_k\}$  used
for proving item $(b)$ of Theorem~\ref{T-laslasreprep},  that is
$f_1, f_2$ and $f_3$ given in Example \ref{E-f1f2f3}, and $f_j(x)=x-x^7$
for $j=4,\ldots,k.$  For any $n\in\N,$  define the maps
\[
F_j\big(x_1,x_2,\ldots,x_n\big)=\big(f_j(x_1),f_j(x_2),\ldots,f_j(x_n)\big),\quad
j=1,2,\ldots,k.
\]
which are from $\R^n$ into itself. Because the components of the
above maps are uncoupled, from Theorem~\ref{T-laslasreprep} we
obtain that the origin is a LAS fixed  point for each $F_j$, but a
repeller fixed point for $F_{k,\ldots,1},$

Analogously, we take the maps  $g_j, j=1,\ldots,k,$ given at the end
of the proof of Theorem~\ref{T-laslasreprep}, and define
$G_j\big(x_1,x_2,\ldots,x_n\big)=\big(g_j(x_1),g_j(x_2),\ldots,g_j(x_n)\big)$,
for $j=1,2,\ldots,k$. For these maps, the origin is a repeller fixed
point for each  $G_j$ but it is LAS fixed point for
$G_{k,\ldots,1}$.

(b) For any $m\in\N$ we define the two maps from
$R^{2m}$ into itself,
\[
F_j\big(x_1,x_2,\ldots,x_{2m}\big)=\big(f_j(x_1,x_2),f_j(x_3,x_4),
\ldots,f_j(x_{2m-1},x_{2m})\big),\quad
j=1,2,
\]
where the $f_j$ are the ones appearing either in
Example~\ref{Ex-dim2-1} or in Example~\ref{Ex-dim2-2}. Then the
result follows taking the periodic set $\{F_1,F_2\}$  and noticing
that the dynamics of each consecutive pair of components of any map
$F_j$ is uncoupled.

\subsection*{Acknowledgements} The authors are supported by
Ministry of Economy, Industry and Competitiveness of the Spanish
Government through grants MINECO/FEDER MTM2016-77278-P  (first and
second authors) and DPI2016-77407-P (third author). The first  and
second authors are also supported by the grant 2014-SGR-568  from
AGAUR, Generalitat de Catalunya. The third author is supported by
the grant 2014-SGR-859 from AGAUR, Generalitat de Catalunya.

\end{document}